\newtheorem{thm}{Theorem}[section]
\newtheorem{prob}{Problem}[section]
\newtheorem{lem}{Lemma}[section]
\newtheorem{cor}{Corollary}[section]
\newtheorem{fact}{Fact}[section]
\newtheorem{claim}{Claim}[section]
\theoremstyle{definition}
\begin{document}
\title{ A strengthening of the spectral chromatic critical edge theorem: books and theta graphs
\footnote{Supported by the National Natural Science Foundation of China (Nos. 12171066, 11771141, 12011530064)
and Anhui Provincial Natural Science Foundation (No. 2108085MA13).}}
\author{{\bf Mingqing Zhai$^{a}$}
, {\bf Huiqiu Lin$^{b}$}\thanks{Corresponding author. E-mail addresses: mqzhai@chzu.edu.cn
(M. Zhai); huiqiulin@126.com (H. Lin).}
\\
{\footnotesize $^a$ School of Mathematics and Finance, Chuzhou University, Chuzhou, Anhui 239012, China} \\
{\footnotesize $^b$ School of Mathematics, East China University of Science and Technology, Shanghai 200237, China}}
\date{}

\date{}
\maketitle
{\flushleft\large\bf Abstract}
The chromatic critical edge theorem of Simonovits states that for a given color critical graph
$H$ with $\chi(H)=k+1$, there exists an $n_0(H)$ such that the Tur\'an graph $T_{n,k}$
is the only extremal graph with respect to $ex(n,H)$ provided $n \geq n_0(H)$.
Nikiforov's pioneer work on spectral graph theory implies that the color critical edge theorem
also holds if $ex(n,H)$ is replaced by the maximum spectral radius and $n_0(H)$ is an exponential function of $|H|$.
We want to know which color critical graphs
$H$ satisfy that $n_0(H)$ is a linear function of $|H|$.
Previous graphs include complete graphs and odd cycles. In this
paper, we find two new classes of graphs: books and theta
graphs. Namely, we prove that
every graph on $n$ vertices with $\rho(G)>\rho(T_{n,2})$ contains a
book of size greater than $\frac{n}{6.5}$.
This can be seen as a spectral version of a 1962 conjecture by Erd\H{o}s,
which states that every graph on $n$ vertices with $e(G)>e(T_{n,2})$ contains a
book of size greater than $\frac{n}{6}$.
In addition, our result on theta graphs
implies that if $G$ is a graph of order $n$ with $\rho(G)>\rho(T_{n,2})$, then
$G$ contains a cycle of length $t$ for every $t\leq \frac{n}{7}$.
This is related to an open question by Nikiforov which asks to determine the maximum
$c$ such that every graph $G$ of large enough
order $n$ with $\rho(G)>\rho(T_{n,2})$ contains a cycle
of length $t$ for every $t\leq cn$.

\begin{flushleft}
\textbf{Keywords:} Spectral extrema, Chromatic critical edge theorem, Book, Theta
graph, Consecutive cycles
\end{flushleft}
\textbf{AMS Classification:} 05C50; 05C35

\section{Introduction}

The \emph{Tur\'{a}n number} $ex(n,H)$ is the maximum number of edges
over all graphs with $n$ vertices that do not contain $H$ as a subgraph.
A graph on $n$ vertices is said to be \emph{extremal} with respect to $ex(n,H)$,
if it has exactly $ex(n,H)$ edges and does not contain a subgraph isomorphic to $H$.
The \emph{Tur\'{a}n graph},
denoted by $T_{n,k}$, is the complete $k$-partite graph $K_{n_1,n_2,\ldots,n_k}$,
where $\sum_{i=1}^k n_i=n$ and $\lfloor\frac{n}{k}\rfloor\le n_i \le \lceil\frac{n}{k}\rceil$.
A main motivation for studying Tur\'{a}n numbers is that they are often useful in Ramsey theory, where the original
statements can be seen in \cite{D}.
There are very few cases when the exact value of $ex(n,H)$ is known.
By Tur\'{a}n's Theorem \cite{bb}, $ex(n, K_{k+1}) = e(T_{n,k})$ for $n\geq k+1$,
and $T_{n,k}$ is the unique extremal graph.
The Tur\'{a}n number $ex(n,H)$ is determined for some cases when $H$ is acyclic, see $ex(n,P_k)$ \cite{ER,bah,ba1},
$ex(n, k\cdot P_2)$ \cite{ER}, $ex(n, k\cdot P_3)$ \cite{BK,LT}, $ex(n,k\cdot K_{1,k})$ \cite{LAN2}.
More extremal results on forests can be seen in \cite{BHC,YWH}.
F\"{u}redi and Gunderson \cite{aaa} determined $ex(n, C_{2k+1})$ for all $k$ and $n$ and characterized all the extremal graphs.
In particular, $ex(n, C_{2k+1})=e(T_{n,2})=\lfloor\frac{n^2}{4}\rfloor$ for $n\geq4k-2$.

Let $A(G)$ be the \emph{adjacency matrix} of a graph $G$. The largest modulus
of all eigenvalues of $A(G)$ is called \emph{the spectral radius} of $G$ and denoted by $\rho(G)$.
Nikiforov \cite{V5} posed a \emph{Brualdi-Solheid-Tur\'{a}n type problem}: what is the
maximal spectral radius of an $H$-free graph with $n$ vertices?
Let $$ex_{sp}(n,H)=\max\{\rho(G):|G|=n, H\nsubseteq G\}.$$
A graph $G$ is said to be
\emph{extremal} with respect to $ex_{sp}(n,H)$, if $\rho(G)=ex_{sp}(n,H)$ and $G$ is an $H$-free graph of order $n$.
We use $Ex_{sp}(n,H)$ to denote the set of extremal graphs with respect to $ex_{sp}(n,H)$.

In the past decade, much attention has been paid to the above extremal problem,
see $ex_{sp}(n,K_{k+1})$ \cite{V1,WILF}, $ex_{sp}(n,K_{s,t})$ \cite{BG,V1,Niki}, $ex_{sp}(n,P_k)$ \cite{V5},
$ex_{sp}(n,\bigcup_{i=1}^kP_{a_i})$ \cite{CLZ}, $ex_{sp}(n,\bigcup_{i=1}^kS_{d_i})$ \cite{CLZ2},
$ex_{sp}(n,C_4)$ \cite{V1,Zhai}, $ex_{sp}(n,C_6)$ \cite{ZL}, $ex_{sp}(n,$ $C_{2k+1})$ \cite{V5}, $ex_{sp}(n,F_k)$ \cite{CFTZ}
(where $F_k$ is the friendship graph), (outer) planar graphs \cite{LIN,TT}.
For more results on extremal spectral graph theory, see \cite{RON,LNW,LU,0}.

An edge $e$ in a graph $H$ is called a \emph{color critical edge}, if $\chi(H-{e})=\chi(H)-1$,
where $\chi(H)$ is the chromatic number of $H$.
The following result is called \emph {chromatic critical edge theorem} by F\"{u}redi and Gunderson \cite{aaa}.

\begin{thm}\label{tm00} (Simonovits,\cite{aa})
Let $H$ be a graph containing a color critical edge and $\chi(H)=k+1$.
Then there exists an $n_0(H)$ such that if $n\geq n_0(H)$ then $T_{n,k}$ is the only extremal graph with respect to $ex(n,H)$.
\end{thm}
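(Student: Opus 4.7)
The plan is to combine the Erd\H{o}s--Stone--Simonovits density result with the Erd\H{o}s--Simonovits stability theorem and a cleaning/embedding argument that exploits the color-critical edge. First, since $\chi(T_{n,k})=k<\chi(H)$, the Tur\'{a}n graph is $H$-free, so any extremal $G$ satisfies $e(G)\ge e(T_{n,k})=\bigl(1-\tfrac{1}{k}+o(1)\bigr)\binom{n}{2}$, which matches the Erd\H{o}s--Stone--Simonovits asymptotic for $ex(n,H)$. By Erd\H{o}s--Simonovits stability, for every $\varepsilon>0$ and $n$ large enough, $G$ admits a partition $V_{1}\cup\cdots\cup V_{k}$ such that the number of edges inside the parts plus the number of non-edges between the parts is at most $\varepsilon n^{2}$.

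The color-critical edge $e=uv$ enters as follows: fix any proper $k$-coloring of $H-e$; if $u$ and $v$ received different colors, that coloring would already $k$-color $H$, contradicting $\chi(H)=k+1$. Hence $u,v$ lie in a common color class, say $U_{1}$, while the other classes $U_{2},\dots,U_{k}$ partition the remaining vertices. Writing $h=|V(H)|$, I call $x\in V_{i}$ \emph{typical} if $|N(x)\cap V_{j}|\ge(\tfrac{1}{k}-\sqrt{\varepsilon})n$ for every $j\ne i$ and $|N(x)\cap V_{i}|\le\sqrt{\varepsilon}\,n$. A double count against the stability bound shows that all but $O(\sqrt{\varepsilon}\,n)$ vertices are typical; a Zykov-style symmetrization that replaces each atypical vertex by a clone of a typical vertex (in whichever part maximizes the gain) neither decreases $e(G)$ nor introduces $H$, so by extremality of $G$ I may assume every vertex is typical and $|V_{i}|=n/k+O(\sqrt{\varepsilon}\,n)$.

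The final step is to show that no edge lies inside any $V_{i}$; once this is established, $G$ is $k$-partite, and maximality of the edge count forces $G=T_{n,k}$. Suppose for contradiction that $xy\in E(G)$ with $x,y\in V_{1}$. I embed $H$ into $G$ by setting $u\mapsto x$, $v\mapsto y$ and then embedding the remaining vertices of $U_{j}$ into $V_{j}$ one by one in any order. When placing $w\in U_{j}$, its already-embedded neighbors in $H-e$ lie in classes $U_{j'}$ with $j'\ne j$, hence in $V_{j'}$; by typicality the intersection of their $V_{j}$-neighborhoods has size at least $\tfrac{n}{k}-h\cdot O(\sqrt{\varepsilon}\,n)>h$ once $\varepsilon$ is chosen small enough relative to $h$. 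Thus $H-e$ embeds into $G$, and the edge $xy$ supplies the missing copy of $e=uv$, giving $H\subseteq G$, a contradiction. Choosing $n_{0}(H)$ large enough in terms of $h$ and the stability constants completes the argument.

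The main obstacle is the cleaning/symmetrization step: one must argue that every local modification preserves $H$-freeness, which is exactly where the color-critical edge is indispensable, because any surviving ``extra'' edge or missing cross-edge would, through the embedding of the last paragraph, produce a copy of $H$. Everything else reduces to routine density bookkeeping and a careful choice of $\varepsilon$.
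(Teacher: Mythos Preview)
The paper does not give a self-contained proof of Theorem~\ref{tm00}: it is quoted as Simonovits's result with a citation, and the only argument the paper supplies is the short derivation recorded as Fact~\ref{fact1}, namely that Theorem~\ref{tm01} (the spectral version, itself taken from Nikiforov) together with the inequality $\rho(G)\ge 2e(G)/n$ and the estimate $\tfrac{n}{2}\rho(T_{n,k})<e(T_{n,k})+1$ forces any edge-extremal $H$-free graph to have at most $e(T_{n,k})$ edges. Your proposal, by contrast, is the classical direct route via Erd\H{o}s--Stone density, Erd\H{o}s--Simonovits stability, cleaning, and a greedy embedding that uses the critical edge; this is a genuinely different (and historically prior) argument.

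That said, your cleaning step has a real gap. The assertion that a ``Zykov-style symmetrization \ldots\ nor introduces $H$'' is false in general for color-critical $H$: cloning a vertex $w$ can create copies of $H$ that use both $w$ and its clone $w'$. For a concrete instance take $H=C_5$ and let $w$ have neighbours $a,b,c$ with $bc\in E(G)$; after cloning, $w\,a\,w'\,b\,c\,w$ is a $5$-cycle even if $G$ was $C_5$-free. Zykov symmetrization preserves $K_r$-freeness precisely because $K_r$ has no pair of non-adjacent vertices, but this fails for arbitrary color-critical $H$. You flag this as ``the main obstacle'' yet do not resolve it, and your final paragraph conflates two distinct claims: that the modification preserves $H$-freeness, and that an edge inside a part forces a copy of $H$. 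Only the second is what your embedding argument actually establishes.

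The standard repair is to replace symmetrization by a minimum-degree argument: show first (by deleting a lowest-degree vertex and comparing $e(G)-d(v)$ with $e(T_{n-1,k})$, using induction on $n$) that $\delta(G)\ge \delta(T_{n,k})$, and then show directly that any vertex with at least $\eta n$ neighbours in its own part can play the role of $u$ in your embedding, so no such vertex exists. With typicality obtained this way, your last paragraph goes through verbatim and yields $G=T_{n,k}$.
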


Having seen various spectral forms of Tur\'{a}n-type results, one can expect that more
results that surround it can be cast in spectral form as well. This is indeed, for example,
spectral Erd\H{o}s-Stone-Bollob\'{a}s theorem \cite{ESB} and
spectral version of saturation problem \cite{ESB1}.
By Nikiforov's result (see Theorem 2,\cite{ESB1}) and a more careful calculation on the equality case in his proof,
one can get the following spectral version of chromatic critical edge theorem.

\begin{thm}\label{tm01}
Let $H$ be a graph containing a color critical edge and $\chi(H)=k+1$.
Then there exists an $n_0(H)$ (where $n_0(H)\geq e^{|H|k^{(2k+9)(k+1)}}$)
such that if $n\geq n_0(H)$ then $T_{n,k}$ is the only extremal graph with respect to $ex_{sp}(n,H)$.
\end{thm}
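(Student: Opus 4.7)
The plan is to derive the theorem directly from Nikiforov's Theorem 2 in [ESB1] by a careful analysis of the equality case. Let $G$ be an extremal graph for $ex_{sp}(n,H)$. First I would observe that $T_{n,k}$ is $H$-free, because $\chi(T_{n,k})=k<k+1=\chi(H)$; this immediately yields $\rho(G)\geq \rho(T_{n,k})$, which is the point of entry to Nikiforov's theorem.

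Applied at the threshold $n\geq e^{|H|k^{(2k+9)(k+1)}}$, Nikiforov's Theorem 2 furnishes, for any $H$-free graph $G$ with $\rho(G)\geq \rho(T_{n,k})$, a vertex partition $V(G)=V_1\cup\cdots\cup V_k$ in which (i) each $|V_i|$ is within $o(n)$ of $n/k$, (ii) the total number of edges inside the parts is a negligible fraction of $n^2$, and (iii) almost every pair in $V_i\times V_j$ (for $i\ne j$) is an edge. The first refinement I would carry out is to trace through his proof under the strict inequality assumption and extract the sharper statement that, when $G$ is additionally extremal, $G$ must be exactly $k$\nobreakdash-partite with respect to the above partition, i.e.\ contains no edge inside any $V_i$.

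At this point the color\nobreakdash-critical structure is brought in. Write $e=xy$ for the color critical edge of $H$ and fix a proper $k$\nobreakdash-coloring of $H-e$ in which $x$ and $y$ lie in a common color class. If $G$ were to contain a within\nobreakdash-part edge $uv\in E(G[V_i])$, then property (iii) plus the large lower bound on $n$ would allow a greedy embedding of $H-e$ into $G$ with $x\mapsto u$ and $y\mapsto v$: the remaining vertices of the color class of $x,y$ are placed in $V_i$, and those of every other color class are placed in the corresponding $V_j$, each time using common cross\nobreakdash-neighbors whose existence is guaranteed by the near\nobreakdash-completeness of the bipartite skeleton. The resulting copy of $H$ in $G$ contradicts $H$\nobreakdash-freeness, so $G$ must be $k$\nobreakdash-partite. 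Since $T_{n,k}$ is the unique $k$\nobreakdash-partite graph on $n$ vertices of maximum spectral radius (by Perron--Frobenius together with the well\nobreakdash-known fact that $T_{n,k}$ uniquely maximizes edges in this class), we conclude $G=T_{n,k}$.

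The principal obstacle is the refinement of Nikiforov's conclusion from ``$G$ is approximately $T_{n,k}$'' to ``$G$ equals $T_{n,k}$.'' Two points require care: (a) showing that any missing cross\nobreakdash-edge would strictly lower $\rho$ below $\rho(T_{n,k})$, which can be handled by a perturbation argument on the Perron eigenvector; and (b) certifying that the greedy embedding of $H-e$ succeeds even in the presence of the few vertices of abnormally small cross\nobreakdash-degree produced by Nikiforov's structural estimates. Both issues are absorbed by the exponential lower bound $n_0(H)\geq e^{|H|k^{(2k+9)(k+1)}}$, which is generous enough to dominate every error term that appears, allowing the embedding to proceed in $|H|$ steps with ample room at each step.
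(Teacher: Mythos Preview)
Your opening sentence is exactly what the paper does: it does not prove Theorem~\ref{tm01} at all, but simply states that it follows from Nikiforov's Theorem~2 in \cite{ESB1} ``and a more careful calculation on the equality case in his proof.'' So at the level of overall strategy you are aligned with the paper.

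Where your proposal diverges is in the content you attribute to Nikiforov's Theorem~2. You describe it as a spectral stability statement producing an approximate $k$-partition with few internal edges and a nearly complete bipartite skeleton, and you then build an elaborate greedy embedding of $H-e$ on top of that structure. That is not what Theorem~2 of \cite{ESB1} says. The actual statement is already sharp: with $c=k^{-(2k+9)(k+1)}$ and $n$ large enough, every graph $G$ of order $n$ with $\rho(G)\geq\rho(T_{n,k})$ contains a $K_{k+1}\big(\lfloor c\ln n\rfloor\big)$ \emph{unless} $G=T_{n,k}$. The ``unless $G=T_{n,k}$'' clause is the ``equality case'' the paper alludes to; no further stability analysis or perturbation of the Perron vector is needed.

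Given the correct statement, the deduction is one line rather than a multi-step program. If $G$ is extremal for $ex_{sp}(n,H)$ and $G\neq T_{n,k}$, then $\rho(G)\geq\rho(T_{n,k})$ forces $K_{k+1}(\lfloor c\ln n\rfloor)\subseteq G$. Since $H$ has a color critical edge with $\chi(H)=k+1$, a proper $k$-coloring of $H-e$ places the endpoints of $e$ in the same class; splitting one of them off yields an embedding $H\subseteq K_{k+1}(|H|)$. Hence as soon as $\lfloor c\ln n\rfloor\geq |H|$, i.e.\ $n\geq e^{|H|k^{(2k+9)(k+1)}}=n_0(H)$, we get $H\subseteq G$, a contradiction. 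Thus $G=T_{n,k}$. Your steps (a) and (b), the perturbation argument, and the greedy embedding around low-degree exceptional vertices are all unnecessary; the black box you are invoking is stronger than you assumed.
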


We can observe that if $n\geq k$ and $T_{n,k}$ is an extremal graph with respect to $ex_{sp}(n,H)$
then it is also an extremal graph with respect to $ex(n,H)$.
Indeed, set $s=\lfloor\frac{n} {k}\rfloor$.
Let $T_{n,k}$ be an extremal graph with respect to $ex_{sp}(n,H)$,
and $G^*$ be an extremal graph with respect to $ex(n,H)$.
Note that $\rho(T_{n,k})=\frac12\left(n-2s-1+\sqrt{(n-2s-1)^2+4s(s+1)(k-1)}\right).$
By a well-known inequality $\rho(G)\geq \frac {2e(G)}{n}$ due to Collatz and Sinogowitz \cite{CO},
we have 
$$e(G^*)\leq\frac{n}{2}\rho(G^*)\leq\frac{n}{2}\rho(T_{n,k})=\frac14\left(n(n-2s-1)+n\sqrt{(n-2s-1)^2+4s(s+1)(k-1)}\right).$$
On the other hand, $$e(T_{n,k})=\frac12\sum d(v)=\frac12\left(n(n-2s-1)+s(s+1)k\right).$$
By a direct computation, one can find that $e(G^*)\leq\frac{n}{2}\rho(T_{n,k})<e(T_{n,k})+1$ for $n\geq k$.
It follows that $e(G^*)\leq e(T_{n,k})$ and $T_{n,k}$ is also an extremal graph with respect to $ex(n,H)$.
Thus we have the following result.

\begin{fact}\label{fact1}
Theorem \ref{tm01} can imply Theorem \ref{tm00}.
\end{fact}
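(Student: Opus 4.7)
The plan is to derive Theorem~\ref{tm00} from Theorem~\ref{tm01} by chaining the Collatz--Sinogowitz bound $\rho(G)\geq 2e(G)/n$ with the spectral extremality of $T_{n,k}$ and then comparing the resulting upper bound on $e(G^*)$ with the explicit edge count of $T_{n,k}$. Throughout I set $s=\lfloor n/k\rfloor$ and take $n\geq n_0(H)$ with $n_0(H)$ as in Theorem~\ref{tm01}.

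I would first note that $\chi(T_{n,k})=k<\chi(H)$ makes $T_{n,k}$ itself $H$-free, so any extremal graph $G^*$ with respect to $ex(n,H)$ automatically satisfies $e(G^*)\geq e(T_{n,k})$. For the reverse direction, since $G^*$ is $H$-free, Theorem~\ref{tm01} yields $\rho(G^*)\leq\rho(T_{n,k})$, and Collatz--Sinogowitz yields $e(G^*)\leq\tfrac{n}{2}\rho(G^*)$, so together
\[
e(G^*)\leq \tfrac{n}{2}\rho(T_{n,k}) = \tfrac14\Big(n(n-2s-1)+n\sqrt{(n-2s-1)^2+4s(s+1)(k-1)}\Big).
\]
Comparing this with $e(T_{n,k})=\tfrac12\bigl(n(n-2s-1)+s(s+1)k\bigr)$ via direct algebraic manipulation of the square-root term gives $\tfrac{n}{2}\rho(T_{n,k})<e(T_{n,k})+1$ for every $n\geq k$, and the integrality of $e(G^*)$ then forces $e(G^*)\leq e(T_{n,k})$. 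Combined with the reverse inequality, equality holds and $T_{n,k}$ is an extremal graph with respect to $ex(n,H)$.

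The uniqueness clause of Theorem~\ref{tm00} is where I expect the main obstacle. Assuming $G^*\neq T_{n,k}$, the uniqueness part of Theorem~\ref{tm01} strengthens $\rho(G^*)\leq\rho(T_{n,k})$ to the strict inequality $\rho(G^*)<\rho(T_{n,k})$. When $k\mid n$, the Turán graph is regular and $\tfrac{n}{2}\rho(T_{n,k})=e(T_{n,k})$, so the strict spectral inequality combined with Collatz--Sinogowitz immediately gives $e(G^*)<e(T_{n,k})$, contradicting $e(G^*)\geq e(T_{n,k})$. When $k\nmid n$, however, $\tfrac{n}{2}\rho(T_{n,k})>e(T_{n,k})$ strictly, and the integer slack argument by itself leaves room for a hypothetical $G^*\neq T_{n,k}$ with $e(G^*)=e(T_{n,k})$ and $2e(T_{n,k})/n\leq\rho(G^*)<\rho(T_{n,k})$; to close this gap I would invoke a stability-type argument, using that for $n$ large any $H$-free graph with $e$ close to $e(T_{n,k})$ must be structurally near $T_{n,k}$, and then exploit $\rho(G^*)<\rho(T_{n,k})$ to exclude every such near-copy other than $T_{n,k}$ itself.
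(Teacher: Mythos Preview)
Your derivation of $ex(n,H)=e(T_{n,k})$ is exactly the paper's: chain Collatz--Sinogowitz with the spectral extremality of $T_{n,k}$ to get $e(G^*)\le\tfrac n2\rho(T_{n,k})$, compute $\tfrac n2\rho(T_{n,k})<e(T_{n,k})+1$ from the explicit formula for $\rho(T_{n,k})$, and use integrality. The paper's own justification stops precisely here; it only concludes that $T_{n,k}$ is \emph{an} extremal graph for $ex(n,H)$, and does not address the uniqueness clause of Theorem~\ref{tm00} at all.

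Your discussion of uniqueness therefore goes beyond what the paper does. The $k\mid n$ case you handle is correct: regularity gives $\tfrac n2\rho(T_{n,k})=e(T_{n,k})$, so the strict spectral inequality from the uniqueness in Theorem~\ref{tm01} forces $e(G^*)<e(T_{n,k})$. For $k\nmid n$ you correctly identify the obstacle, but the ``stability-type argument'' you gesture at is not a reduction: making it precise would essentially rerun the Simonovits machinery behind Theorem~\ref{tm00} itself, so nothing is gained by routing through Theorem~\ref{tm01}. In summary, on the part the paper actually proves your approach coincides with it; on uniqueness, neither the paper's paragraph nor your proposal supplies a complete argument.
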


The main interest of the paper is the following problem.
We believe that this problem is very useful to study the existence of desired subgraphs.

\begin{prob}\label{que0}
Which color critical graphs $H$ satisfy
that $n_0(H)$ is a linear function of $|H|$, where
$n_0(H)$ is defined in Theorem \ref{tm01}?
\end{prob}

Up to now, Problem \ref{que0} holds for complete graphs \cite{V1} and odd cycles \cite{NI2}.
More precisely, $n_0(K_r)=r$ and $n_0(C_{2r-1})\leq640r$.
In order to find more support on Problem \ref{que0}, we show the following two theorems. The proofs
are given in Sections \ref{se3} and \ref{se4}.

Firstly, let $B_{r+1}$ be the \emph{$(r+1)$-book},
that is, the graph obtained from $K_{2,r+1}$ by adding an edge
within the partition set of two vertices.
Obviously, $B_{r+1}$ contains a color critical edge and $\chi(B_{r+1})=3$.

\begin{thm}\label{thm1}
$Ex_{sp}(n,B_{r+1})=\{T_{n,2}\}$ for $n\geq \frac{13}{2}r.$
\end{thm}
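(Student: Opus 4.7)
Suppose $G$ is a $B_{r+1}$-free graph of order $n \geq 13r/2$ with $\rho(G) \geq \rho(T_{n,2})$; the plan is to prove $G \cong T_{n,2}$. Let $\mathbf{x}$ be the Perron eigenvector of $G$ scaled so that $x_{u^*} = \max_v x_v = 1$ at some vertex $u^* \in V(G)$. Write $\rho = \rho(G)$, $d = d_G(u^*)$, $V_1 = N_G(u^*)$, and $V_2 = V(G) \setminus (V_1 \cup \{u^*\})$. The Perron equation at $u^*$ gives $\rho = \sum_{v \in V_1} x_v \leq d$, so $d \geq \rho \geq \rho(T_{n,2}) = \sqrt{\lfloor n/2 \rfloor \lceil n/2 \rceil}$; in particular $d \geq \lceil n/2 \rceil$ and $|V_2| = n - 1 - d \leq \lfloor n/2 \rfloor - 1$. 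The $B_{r+1}$-freeness applied to each edge $u^*v$ with $v \in V_1$ forces $|N(v) \cap V_1| \leq r$, so the induced subgraph $G[V_1]$ has maximum degree at most $r$; consequently $e(G[V_1]) \leq rd/2$.

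The core of the argument is to combine several Perron identities into a master inequality. The $u^*$-coordinate of $A^2\mathbf{x} = \rho^2\mathbf{x}$ reads
\[
\rho^2 = d + \sum_{v \in V_1} d_{G[V_1]}(v)\, x_v + \sum_{v \in V_2} |V_1 \cap N(v)|\, x_v,
\]
where the middle sum is at most $r\rho$ by $d_{G[V_1]}(v) \leq r$ and $\sum_{v \in V_1} x_v = \rho$. For the last sum, I would use two auxiliary bounds: $x_v \leq d(v)/\rho$ for $v \in V_2$ (derived from the Perron equation $\rho x_v = \sum_{w \sim v} x_w \leq d(v)$), and the degree-sum identity $\sum_{v \in V_2} d(v) = 2e(G[V_2]) + e(V_1,V_2)$. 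Feeding these in, together with the observation that for any edge $v_1 v_2 \in G[V_1]$ one has $|V_2 \cap N(v_1) \cap N(v_2)| \leq r - 1$ (since $u^*$ is already one of at most $r$ common neighbors of $v_1, v_2$), produces a polynomial inequality in $\rho$, $d$, $e(G[V_1])$, $e(G[V_2])$, and $d|V_2| - e(V_1, V_2)$. Under the hypothesis $n \geq 13r/2$, this inequality is tight only when $e(G[V_1]) = e(G[V_2]) = 0$ and $e(V_1, V_2) = d|V_2|$.

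The structural consequences are that $V_1$ is independent, every $v \in V_2$ is joined to every vertex of $V_1$, and $V_2 \cup \{u^*\}$ is independent. Hence $G$ is the complete bipartite graph $K_{d,n-d}$ with parts $V_1$ of size $d$ and $V_2 \cup \{u^*\}$ of size $n-d$, so $\rho(G) = \sqrt{d(n-d)}$. Since $\rho(G) \geq \rho(T_{n,2}) = \sqrt{\lfloor n/2 \rfloor \lceil n/2 \rceil}$ and $d(n-d)$ is maximized on $d + (n-d) = n$ at the balanced partition, we must have $d \in \{\lfloor n/2 \rfloor, \lceil n/2 \rceil\}$, and therefore $G = T_{n,2}$. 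The main obstacle is the quantitative Perron analysis: the naive inequality $\rho^2 \leq d(1+r) + e(V_1,V_2) \leq d(n+r-d)$ only yields $\rho \leq (n+r)/2$, which is strictly larger than $\rho(T_{n,2})$ for $r \geq 1$. To force $V_1$ to be independent one must sharpen this by exploiting (i) the book-freeness constraint $|V_2 \cap N(v_1) \cap N(v_2)| \leq r - 1$ at each edge of $G[V_1]$, (ii) the refined bound $x_v \leq d(v)/\rho$ for $v \in V_2$, and (iii) the degree-sum constraints on $V_2$. The constant $13/2$ emerges as the precise threshold at which these constraints combine to eliminate the $r\rho$ slack, and most of the technical work lies in tracking the arithmetic through this combination.
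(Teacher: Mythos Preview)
Your overall framework---pick $u^*$ with maximum Perron coordinate, set $V_1=N(u^*)$, $V_2$ the rest, expand $\rho^2 x_{u^*}$, and exploit that $d_{V_1}(v)\le r$ and $|V_2\cap N(v_1)\cap N(v_2)|\le r-1$---matches the paper exactly. The structural endgame (once $V_1$ is independent, $G$ must be $T_{n,2}$) is also the paper's Lemma~\ref{lem3}. The gap is in the ``master inequality'' step, and it is a real one.

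Your stated bound on the middle sum, $\sum_{v\in V_1} d_{G[V_1]}(v)\,x_v\le r\rho$, is too coarse to finish. The point is that this bound does not depend on $e(G[V_1])$, so it cannot be played off against the gain you get from the missing edges in $E(V_1,V_2)$. Concretely: from your constraint~(i) the paper derives (Claim~\ref{cla3}) that $\overline{e(V_1,V_2)}\ge \frac{1}{r}\,e(G[V_1])\,(|V_2|-r+1)$, so $e(V_1,V_2)\le d|V_2|-\frac{e(G[V_1])}{r}(|V_2|-r+1)$. Plugging this in with your $r\rho$ bound gives
\[
\rho^2 \;\le\; d(|V_2|+1)\;+\;r\rho\;-\;\frac{e(G[V_1])}{r}(|V_2|-r+1),
\]
and when $e(G[V_1])=1$ the negative term is $O(n/r)$ while $r\rho\approx rn/2$, so the inequality does not force $\rho^2\le\lfloor n^2/4\rfloor$. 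What you need is a bound on the middle sum that is \emph{linear in $e(G[V_1])$}, so it can cancel against the $\overline{e(V_1,V_2)}$ term edge by edge. The paper obtains exactly this (Claim~\ref{cla2}): for each edge $u_1u_2\in E(G[V_1])$, the eigen-equations at $u_1,u_2$ combined with $d_{V_1}(u_i)\le r$ and $d_{V_2}(u_1)+d_{V_2}(u_2)\le |V_2|+r-1$ yield $x_{u_1}+x_{u_2}\le \frac{|V_2|+r+1}{\rho-r}$, whence the middle sum is at most $e(G[V_1])\cdot\frac{|V_2|+r+1}{\rho-r}$. Feeding both bounds in produces
\[
\rho^2 \;\le\; d(|V_2|+1)\;+\;e(G[V_1])\Bigl(\tfrac{|V_2|+r+1}{\rho-r}-\tfrac{|V_2|-r+1}{r}\Bigr),
\]
and a short case analysis on the sign of the parenthesis (using $\rho>3r$ from $n\ge 13r/2$) forces a contradiction unless $e(G[V_1])=0$. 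Your ingredients~(ii) and~(iii)---the bound $x_v\le d(v)/\rho$ for $v\in V_2$ and degree-sums over $V_2$---are not used by the paper and do not obviously substitute for this per-edge eigenvector estimate; the action is entirely on the $V_1$ side.
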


The size of a book $B_r$ is the number $r$ of triangles in it.
The size of the largest book in a
graph $G$ is called the \emph{booksize} of $G$.
In 1962, Erd\H{o}s \cite{Er} initiated the study of books in graphs.
Since then, books have attracted considerable attention in extremal graph theory (see, for example, \cite{BB, Er1,Er2,KH}).
Erd\H{o}s \cite{Er} proposed the following conjecture:
the booksize of a graph $G$ on $n$ vertices with $e(G)>e(T_{n,2})$ is greater than $\frac{n}6$.
This conjecture was proved by Edwards in an unpublished manuscript \cite{ED} and independently by Khad\v{z}iivanov and Nikiforov in \cite{KH}.
Moreover, they constructed a graph with $n=6r$ vertices, $m>e(T_{n,2})$ edges
such that its booksize is $\frac n6+1$. This implies that the booksize $\frac{n}6$ is best possible.
Since we focus on spectral extremal problems,
it is natural to ask what is the minimum booksize of a graph $G$ on $n$ vertices with $\rho(G)>\rho(T_{n,2})$?
Theorem \ref{thm1} gives the following answer.

\begin{cor}\label{cor1}
For arbitrary positive integer $n$,
the booksize of a graph $G$ on $n$ vertices with $\rho(G)\geq\rho(T_{n,2})$ is greater than $\frac{n}{6.5}$,
unless $G\cong T_{n,2}$.
\end{cor}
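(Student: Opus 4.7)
The plan is to deduce Corollary \ref{cor1} as an immediate consequence of Theorem \ref{thm1}; all the real work is already encapsulated in that theorem, so what remains is a short bookkeeping argument.

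Given a graph $G$ on $n$ vertices with $\rho(G)\geq\rho(T_{n,2})$, I let $s$ denote the booksize of $G$ and assume $G\not\cong T_{n,2}$. The goal is $s>n/6.5$. I argue by contradiction: suppose instead that $s\leq n/6.5$. Since $s$ is the booksize, $G$ contains no copy of $B_{s+1}$; setting $r:=s$, this just says that $G$ is $B_{r+1}$-free. The assumption $s\leq n/6.5=2n/13$ rearranges exactly to $n\geq 13r/2$, which is precisely the hypothesis of Theorem \ref{thm1}. Applying that theorem gives $Ex_{sp}(n,B_{r+1})=\{T_{n,2}\}$, so every $B_{r+1}$-free graph on $n$ vertices has spectral radius at most $\rho(T_{n,2})$, with equality only when the graph equals $T_{n,2}$. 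Since $G$ is $B_{r+1}$-free and $G\not\cong T_{n,2}$, this forces $\rho(G)<\rho(T_{n,2})$, contradicting the hypothesis.

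I do not anticipate any real obstacle in this step: the corollary is a direct repackaging of Theorem \ref{thm1} through the choice $r=s$, and the algebraic identity $n/6.5=2n/13$ makes the two bounds literally the same inequality. The only mild point worth noting is the edge case $s=0$ (i.e.\ $G$ triangle-free), which is covered either by applying Theorem \ref{thm1} with $r=0$, so that $B_1=K_3$ and we recover the spectral Mantel theorem, or by citing that spectral Mantel theorem directly; in either formulation the argument above goes through without change.
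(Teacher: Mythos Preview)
Your proposal is correct and is exactly the intended deduction: the paper does not give a separate proof of Corollary~\ref{cor1} but simply states it as an immediate consequence of Theorem~\ref{thm1}, precisely via the substitution $r=s$ and the equivalence $s\le n/6.5 \Leftrightarrow n\ge 13r/2$ that you spell out. Your handling of the $s=0$ case (triangle-free $G$, reducing to the spectral Mantel theorem) is a reasonable extra remark; otherwise nothing needs to be added.
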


Recall that $\frac n2\rho(T_{n,k})<e(T_{n,k})+1$.
If $e(G)>e(T_{n,2})$, then $$\rho(G)\geq \frac{2e(G)}n\geq\frac{2(e(T_{n,2})+1)}n>\rho(T_{n,2}).$$
Thus, we propose a stronger problem than Erd\H{o}s' conjecture for further research.

\begin{prob}\label{con3}
For arbitrary positive integer $n$, if $G$ is a graph of
order $n$ with $\rho(G)>\rho(T_{n,2})$, is it true that the booksize of $G$ is greater than $\frac{n}6$?
\end{prob}

A \emph{generalized theta graph} $\theta(l_1,l_2,\ldots,l_t)$
is the graph obtained by connecting two vertices with $t$ internally disjoint paths of lengths $l_1,l_2,\ldots,l_t$,
where $l_1\leq l_2\ldots\leq l_t$ and $l_2\geq2$.
In particular, $\theta(l,l)\cong C_{2l}.$ Therefore, the problem of determining $ex(n, \theta(l_1,l_2,\ldots,l_t))$
generalizes the problem of determining $ex(n, C_{2l})$.
With the restriction $l_1=\cdots=l_t=l$,
Faudree and Simonovits \cite{FS} showed that a $\theta(l,l,\ldots,l)$-free graph on $n$ vertices has $O_{t,l}(n^{1+1/l})$ edges.
Recently, Bukh and Tait \cite{BT} improved their result.
With the restriction $t=3$, we obtain a \emph{theta graph} $\theta(p,q,r)$.
For the lower bound of $ex(n,\theta(4,4,4))$,
Verstra\"{e}te and Williford \cite{bag} proved a one whose order of magnitude is $n^{5/4}$.
In this paper, we focus on a class of theta graphs $\theta(1,2,r+1)$.
For convenience, we use $\theta_{r+1}$ to denote $\theta(1,2,r+1)$.
Obviously, $\chi(\theta_{r+1})=3$ and $\theta_{r+1}$ contains a color critical edge.

\begin{thm}\label{thm2}
$Ex_{sp}(n,\theta_{r+1})=\{T_{n,2}\}$ for $n\geq 10r$ if $r$ is odd and $n\geq 7r$ if $r$ is even.
\end{thm}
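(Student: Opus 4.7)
The approach is a proof by contradiction following the template of Theorem \ref{thm1}: assume $G$ is a $\theta_{r+1}$-free graph of order $n$ with $\rho:=\rho(G)\geq\rho(T_{n,2})$ and $G\not\cong T_{n,2}$, and locate a copy of $\theta_{r+1}$ in $G$. Let $x$ be the Perron eigenvector of $G$, normalized so that $x_z=\max_v x_v=1$. Since $\rho(T_{n,2})\geq\sqrt{\lfloor n/2\rfloor\lceil n/2\rceil}\geq(n-1)/2$ and $\rho x_z=\sum_{v\sim z}x_v\leq d(z)$, we obtain $d(z)\geq(n-1)/2$. The second-step estimate $\rho^2\leq\sum_{v\in N(z)}d(v)$ then forces $G$ to be close to a bipartite structure with parts $N(z)$ and $V(G)\setminus N(z)$, so that most vertices outside $N(z)$ have many neighbors inside $N(z)$.

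Next, I would observe that $G$ contains $\theta_{r+1}$ as soon as some edge $uv$ lies in a triangle $uvw$ together with a $u$-$v$ path of length $r+1$ avoiding $w$. In particular, if $uv$ is an edge with $|N(u)\cap N(v)|\geq r+1$, then any $u$-$v$ path of length $r+1$ (having only $r$ internal vertices) automatically misses some common neighbor $w$, producing a copy of $\theta_{r+1}$. The task thus splits into two subtasks: (i) find an edge $uv$ with at least $r+1$ common neighbors, and (ii) show that some such $uv$ lies on a $u$-$v$ path of exact length $r+1$ in $G$. Part (i) I would handle by a spectral book-finding argument in the spirit of Theorem \ref{thm1}, showing that under the stated thresholds any $\theta_{r+1}$-free $G\not\cong T_{n,2}$ with $\rho(G)\geq\rho(T_{n,2})$ must contain such an edge.

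For part (ii), write $A=N(u)\setminus\{v\}$, $B=N(v)\setminus\{u\}$, $W=A\cap B$, so that a $u$-$v$ path of length $r+1$ corresponds to an $A$-$B$ path of length $r-1$ in $G-\{u,v\}$. Because $d(u),d(v)\geq(n-1)/2$ and $G$ behaves like a dense bipartite graph between $N(z)$ and its complement, I would construct the required path greedily, alternating between the two approximate parts and absorbing through $W$ when necessary. The parity case split enters precisely here. A $u$-$v$ path of length $r+1$ in an almost-bipartite graph must respect the parity of its endpoints relative to the bipartition; when $r$ is odd (so $r+1$ is even) the parity forces the use of an extra edge lying inside one side, hence the stronger threshold $n\geq 10r$, whereas when $r$ is even (so $r+1$ is odd) the path can stay on the natural bipartition, permitting the weaker threshold $n\geq 7r$.

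The main obstacle is step (ii): producing a path of exactly the prescribed length between two prescribed vertices in an almost-bipartite graph is not covered by standard long-path results such as Erd\H{o}s--Gallai or P\'osa rotation-extension, because parity matters and the endpoints are fixed. Most of the technical work of the proof will therefore go into a tight combinatorial lemma that, using the quantitative consequences of the spectral assumption together with the parity hypothesis on $r$, guarantees the $u$-$v$ path of length $r+1$. Once this lemma is available, the contradiction $\theta_{r+1}\subseteq G$ is immediate.
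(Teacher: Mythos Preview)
Your proposal takes a different route from the paper and contains a genuine gap.

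\textbf{The gap.} In step~(ii) you assert that the book edge $uv$ satisfies $d(u),d(v)\geq(n-1)/2$. This is not justified: the spectral hypothesis only gives $d(z)\geq\rho$ for the vertex $z$ with maximal Perron coordinate, not for the endpoints of an arbitrary book edge produced by Theorem~\ref{thm1}. Without high degree at both $u$ and $v$, your greedy alternating-path construction has no leverage, and there is no reason an exact $u$--$v$ path of length $r+1$ should exist. You acknowledge this is the ``main obstacle'' but then simply postulate a ``tight combinatorial lemma'' to resolve it; that lemma is the entire content of the theorem, and nothing in your sketch indicates how to prove it when $u,v$ are uncontrolled.

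\textbf{How the paper avoids this.} The paper never leaves the vertex $u^\star$ of maximal Perron coordinate. It sets $A=N(u^\star)$, $B=V(G)\setminus(A\cup\{u^\star\})$, and uses the key inequality $\gamma(u^\star):=|A|+2e(A)+e(A,B)\geq\rho^2\geq\lfloor n^2/4\rfloor$. By Lemma~\ref{lem3} it suffices to show $e(A)=0$. Assuming $e(A)\neq 0$, the paper does \emph{not} try to build an exact-length path between two arbitrary vertices; instead it uses Lemma~\ref{lem1} (a bipartite Erd\H{o}s--Gallai variant) to show that the bipartite graph $(A,B)$ must contain a long $A$-path (length $3r-1$ for $r$ odd, $2r$ for $r$ even), and then argues that any edge inside $A$ whose endpoints are not on this $A$-path forces a $\theta_{r+1}$ anchored at $u^\star$. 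A careful ``good component'' analysis then shows that if $e(A)\neq 0$, enough edges are missing from $E(A,B)$ to push $\gamma(u^\star)$ below $\lfloor n^2/4\rfloor$, contradicting the displayed inequality. The parity split enters because the $A$-path and the target path have different parity requirements when $r$ is odd versus even, which changes the length thresholds and hence the constants $10$ and $7$.

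In short: anchoring at $u^\star$ is not cosmetic. It is what converts the global spectral information into local degree information that makes the path-building work. Your approach gives up that anchor in step~(i) and cannot recover it in step~(ii).
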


The study of consecutive cycles is an important topic in extremal graph theory.
From \cite{BO}, we know that if $G$ is a graph of order $n$ with $e(G)>e(T_{n,2})$, then $G$ contains
a cycle of length $t$ for every $t\leq \lfloor\frac{n+3}2\rfloor$.
In 2008, Nikiforov \cite{NI2} studied the spectral condition for the existence of consecutive cycles.
He proved that if $G$ is a graph of order $n$ with $\rho(G)>\rho(T_{n,2})$,
then $G$ contains a cycle of length $t$ for every $t\leq \frac{n}{320}$.
Moreover, Nikiforov proposed the following problem.

\begin{prob}\label{que1}(Nikiforov, \cite{NI2})
Determine the maximum constant $c$ such that for all positive $\varepsilon<c$ and large enough $n$,
every graph $G$ of order $n$ with $\rho(G)>\rho(T_{n,2})$
contains a cycle of length $t$ for every $t\leq (c-\varepsilon)n$.
\end{prob}

Nikiforov's result implies that $c>\frac1 {320}$.
Moreover, Nikiforov \cite{NI2} constructed a graph $S_{n,k}$,
which is the join of a complete graph of order $k=\lceil\frac{(3-\sqrt{5})n}4\rceil$
with an empty graph of order $n-k$. One can see $\rho(S_{n,k})>\frac n2\geq \rho(T_{n,2})$, but $S_{n,k}$ contains no cycle of length
longer than $2k$ (If $S_{n,k}$ contains a cycle $C$ of length at least $2k+1$,
then $C$ contains at least $k+1$ vertices out of the $k$-clique.
Since any two of these $k+1$ vertices can not be consecutive in $C$,
$C$ contains at least $k+1$ vertices of the $k$-clique, a contradiction).
This implies that $c\leq\frac{3-\sqrt{5}}2<\frac1{2.5}.$
Recently, Ning and Peng \cite{NP} improved Nikiforov's result by showing $c>\frac1 {160}$.
By Theorem \ref{thm2}, for every even $r$ and every graph $G$ of order $n\geq 7r$,
if $\rho(G)>\rho(T_{n,2})$ then $G$ contains a $\theta_{r+1}$.
It follows that $G$ contains $\theta_{3}$,$\theta_{5}$,\ldots,$\theta_{r+1}$.
Note that $\theta_{t+1}$ contains $C_3$, $C_{t+2}$ and $C_{t+3}$ for each $t=2,4,\ldots,r$.
Thus, we get the following result.

\begin{cor}\label{cor2}
Let $n$ be an arbitrary positive integer and $G$ be a graph of order $n$ with $\rho(G)>\rho(T_{n,2})$.
Then $G$ contains a cycle of length $t$ for every $t\leq \frac{n}{7}$,
unless $G\cong T_{n,2}$.
\end{cor}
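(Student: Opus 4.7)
The plan is to deduce the corollary as an almost immediate consequence of Theorem \ref{thm2}, by extracting cycles of many lengths from the theta subgraphs it provides. The key structural observation is that $\theta_{t+1}=\theta(1,2,t+1)$ consists of two vertices joined by three internally disjoint paths of lengths $1$, $2$, and $t+1$, and the three pairwise unions of these paths form cycles of lengths $3$, $t+2$, and $t+3$, respectively. Hence every copy of $\theta_{t+1}$ in $G$ simultaneously contributes a triangle and two cycles of consecutive lengths $t+2$ and $t+3$.

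With this in hand, the argument is bookkeeping. Under the hypothesis $\rho(G)>\rho(T_{n,2})$, for each even $r$ with $7r\leq n$ Theorem \ref{thm2} forces $\theta_{r+1}\subseteq G$, since $T_{n,2}$ is the unique spectral extremal $\theta_{r+1}$-free graph and $G\not\cong T_{n,2}$. Letting $r$ run through $2,4,6,\ldots,r_{\max}$, where $r_{\max}$ denotes the largest even integer with $7r_{\max}\leq n$, we obtain $C_3$ together with a cycle of each length in the set $\{r+2,r+3:r\text{ even},\ 2\leq r\leq r_{\max}\}$. Because the pairs $(r+2,r+3)$ for consecutive even values of $r$ are $(4,5),(6,7),(8,9),\ldots$, they abut and cover both parities, producing every integer length from $4$ up to $r_{\max}+3$; combined with $C_3$ this gives $C_t$ for every $3\leq t\leq r_{\max}+3$.

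The only remaining check is that $r_{\max}+3\geq \lfloor n/7\rfloor$, which follows because $r_{\max}\geq n/7-2$; for the small values of $n$ with $\lfloor n/7\rfloor<3$, the conclusion is vacuous. The ``main obstacle'' is really absorbed into Theorem \ref{thm2}: once that spectral extremal statement is available, the only delicate point at the level of the corollary is verifying that the parity interleaving of the pairs $\{r+2,r+3\}$ leaves no gap in $[3,n/7]$, which the argument above handles directly.
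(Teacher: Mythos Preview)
Your proposal is correct and follows essentially the same route as the paper: apply Theorem~\ref{thm2} for each even $r$ with $7r\le n$ to obtain $\theta_{r+1}\subseteq G$, then read off from $\theta_{t+1}$ the cycles $C_3$, $C_{t+2}$, $C_{t+3}$ and let the pairs $(t+2,t+3)$ for $t=2,4,\ldots$ tile the interval $[4,\,r_{\max}+3]$. The paper states this derivation in the paragraph preceding Corollary~\ref{cor2}; your additional bookkeeping on $r_{\max}$ and the small-$n$ vacuous range simply makes explicit what the paper leaves implicit.
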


Corollary \ref{cor2} indicates that $c>\frac17$ without the restriction ``sufficiently large $n$".
Inspired by Nikiforov's problem,
one may ask Problem \ref{que1} by removing the condition ``large enough $n$".
Furthermore, we have the following problem.

\begin{prob}
Determine the maximum constant $c'$ such that for any positive integer $n$,
every graph $G$ of order $n$ with $\rho(G)>\rho(T_{n,2})$
contains  a copy of $\theta_{r+1}$ for every $r\leq c'n$.
\end{prob}

Theorem \ref{thm2} implies that $c'\geq\frac 1{10}.$ Moreover, combining Theorems \ref{thm1} and \ref{thm2} with Fact \ref{fact1},
we essentially determine the Tur\'{a}n numbers of $B_{r+1}$ and $\theta_{r+1}$.

\begin{cor}\label{corollary1}
Let $r$ be a positive integer. Then we have the following statements.

\vspace{1mm}
\noindent (i)
$ex(n,B_{r+1})=\lfloor\frac{n^2}{4}\rfloor$ for $n\geq \frac{13}{2}r$;

\vspace{1mm}
\noindent (ii)
$ex(n,\theta_{r+1})=\lfloor\frac{n^2}{4}\rfloor$ for $n\geq 10r$ if $r$ is odd and $n\geq 7r$ if $r$ is even.
\end{cor}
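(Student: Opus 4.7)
The proposal is to derive both parts of Corollary \ref{corollary1} as immediate consequences of Theorems \ref{thm1} and \ref{thm2}, by invoking exactly the transfer mechanism that was written out in the paragraph preceding Fact \ref{fact1}. The overarching principle is that the Collatz--Sinogowitz inequality $\rho(G) \geq 2e(G)/n$, combined with a tight estimate of $\rho(T_{n,2})$, converts a spectral extremal result ($Ex_{sp}(n,H)=\{T_{n,2}\}$) into an edge extremal result ($ex(n,H)=e(T_{n,2})$), provided one first checks that $T_{n,2}$ itself is $H$-free.

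For part (i), I would argue as follows. Since $T_{n,2}$ is bipartite, it is triangle-free, so it contains no copy of $B_{r+1}$ (every book has triangles). This gives the lower bound $ex(n, B_{r+1}) \geq e(T_{n,2}) = \lfloor n^2/4 \rfloor$. For the upper bound, let $G^*$ be an extremal $B_{r+1}$-free graph on $n \geq \frac{13}{2}r$ vertices, and note that the spectral extremal graph $G'\in Ex_{sp}(n,B_{r+1})$ satisfies $\rho(G^*)\le \rho(G')=\rho(T_{n,2})$ by Theorem \ref{thm1}. Collatz--Sinogowitz then yields
\[
e(G^*) \;\leq\; \tfrac{n}{2}\rho(G^*) \;\leq\; \tfrac{n}{2}\rho(T_{n,2}),
\]
and the direct computation carried out just before Fact \ref{fact1} gives $\frac{n}{2}\rho(T_{n,2}) < e(T_{n,2}) + 1$. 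Since $e(G^*)$ is an integer, this forces $e(G^*)\leq e(T_{n,2})$, completing the proof.

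Part (ii) runs along the same lines with $\theta_{r+1}$ in place of $B_{r+1}$ and Theorem \ref{thm2} in place of Theorem \ref{thm1}, using either the threshold $n\geq 10r$ (odd $r$) or $n\geq 7r$ (even $r$). The only new check is that $T_{n,2}$ is $\theta_{r+1}$-free, which is immediate because $\theta_{r+1}=\theta(1,2,r+1)$ contains a triangle formed by the paths of lengths $1$ and $2$, while $T_{n,2}$ is triangle-free.

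I do not anticipate any real obstacle: the corollary is essentially a packaging of the two main theorems via the transfer principle already isolated as Fact \ref{fact1}. The only arithmetic point that needs to be invoked carefully is the inequality $\frac{n}{2}\rho(T_{n,2}) < e(T_{n,2})+1$, but this is precisely what was verified by the explicit formulas $\rho(T_{n,k})=\frac12(n-2s-1+\sqrt{(n-2s-1)^2+4s(s+1)(k-1)})$ and $e(T_{n,k})=\frac12(n(n-2s-1)+s(s+1)k)$ in the lead-up to Fact \ref{fact1}, specialized to $k=2$. No further case analysis in terms of $r$ or the parity of $n$ is required.
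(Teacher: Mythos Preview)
Your proposal is correct and takes essentially the same approach as the paper: the paper states the corollary with the one-line justification ``combining Theorems \ref{thm1} and \ref{thm2} with Fact \ref{fact1}'', and you have simply unpacked that sentence, invoking the Collatz--Sinogowitz transfer argument spelled out before Fact \ref{fact1} together with the observation that $T_{n,2}$ is triangle-free and hence both $B_{r+1}$-free and $\theta_{r+1}$-free.
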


\section{Preliminaries}

All graphs considered here are undirected, loopless and simple.
Let $V(G)$ and $E(G)$ be the \emph{vertex set} and \emph{edge set} of a graph $G$, respectively.
As usual, we use $e(G)$ to denote the size of $G$.
For a vertex $u\in V(G)$ and a subgraph $H\subseteq G$ (possibly $u\notin V(H)$),
we use $N_H(u)$ to denote the set of neighbors of $u$ in $V(H)$ and $d_H(u)$ to denote $|N_H(u)|$.
If $uv\in E(G)$, then we write $u\sim v$.
The subgraph of $ G $ induced by a vertex subset $S$ is denoted by $G[S]$.
For two disjoint subsets $A,B\subseteq V(G)$, let $E(A)$ be the set of edges with both endpoints in $A$
and $E(A,B)$ be the set of edges with one endpoint in $A$ and the other in $B$. Furthermore, we denote
$e(A)=|E(A)|$ and $e(A,B)=|E(A,B)|$.

%

The following result is known as Erd\H{o}s-Gallai theorem.
\begin{lem}[Erd\H{o}s, Gallai, \cite{ER}]\label{lem2}
Let $G$ be a $P_{r+2}$-free graph of order $n$. Then $e(G)\leq \frac{r}{2}n$,
and equality holds if and only if $G$ is a union of disjoint copies of $K_{r+1}$.
\end{lem}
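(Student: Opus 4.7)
The plan is to argue by induction on $n$. For the base case $n\leq r+1$, the trivial bound $e(G)\leq\binom{n}{2}\leq\frac{rn}{2}$ does the job, with equality forcing $n=r+1$ and $G\cong K_{r+1}$. In the inductive step, I would first dispose of the case that $G$ is disconnected by writing $G$ as a vertex-disjoint union of its components, applying the inductive hypothesis to each piece, and summing the bounds; equality then forces every component to again be a disjoint union of copies of $K_{r+1}$.

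So assume $G$ is connected with $n\geq r+2$. I would split on $\delta(G)$. If $\delta(G)\leq\lfloor r/2\rfloor$, let $v$ be a vertex of minimum degree and apply induction to $G-v$ to obtain
$$
e(G)=e(G-v)+d(v)\leq \frac{r(n-1)}{2}+\lfloor r/2\rfloor\leq \frac{rn}{2},
$$
with strict inequality when $r$ is odd. If instead $\delta(G)\geq\lfloor r/2\rfloor+1$, then $2\delta(G)+1\geq r+2$, and the classical fact that a connected graph contains a path on at least $\min(n,2\delta+1)$ vertices (obtained by taking a longest path and using that both endpoints have all their neighbors on it) produces a copy of $P_{r+2}$, contradicting the hypothesis.

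The main obstacle is excluding equality in the connected case when $r$ is even. There $\delta(G)=r/2$, and by induction $G-v$ must be a disjoint union of copies of $K_{r+1}$. Because $G$ is connected, $v$ has a neighbor $u$ lying in one of these $K_{r+1}$-components $C$; the clique $C$ admits a Hamilton path ending at $u$, which extended by the edge $uv$ gives a $P_{r+2}$, a contradiction. Hence equality is impossible when $G$ is connected with $n\geq r+2$, so the extremal graphs are precisely disjoint unions of copies of $K_{r+1}$, as claimed. The only auxiliary input I would either cite or prove on the spot is the $\min(n,2\delta+1)$ path result; the rest is routine bookkeeping across the disconnected/connected and low/high $\delta$ cases, together with the parity split that makes the $r$ even case the delicate one.
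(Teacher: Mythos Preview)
The paper does not prove this lemma; it is quoted as the classical Erd\H{o}s--Gallai theorem with a citation to \cite{ER} and used as a black box throughout Sections~\ref{se3} and~\ref{se4}. So there is no ``paper's own proof'' to compare against.

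That said, your argument is correct. The induction on $n$, splitting first on connectivity and then on whether $\delta(G)\le\lfloor r/2\rfloor$, is a standard route to the Erd\H{o}s--Gallai path bound. The auxiliary fact you invoke --- that a connected graph contains a path on at least $\min(n,2\delta+1)$ vertices --- follows exactly from the rotation argument you sketch: if a longest path $v_1\cdots v_k$ is not Hamiltonian, then no index $i$ can satisfy both $v_1v_{i+1}\in E$ and $v_kv_i\in E$ (else the resulting $k$-cycle plus an outside neighbor yields a longer path), so the two endpoint neighborhoods are disjoint inside $\{1,\dots,k-1\}$, forcing $k\ge 2\delta+1$. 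Your treatment of the equality case is also sound: for odd $r$ the vertex-deletion step already gives a strict inequality, while for even $r$ equality forces $G-v$ to be a disjoint union of $K_{r+1}$'s, and connectivity of $G$ then lets you extend a Hamilton path of one such clique by the edge to $v$, producing the forbidden $P_{r+2}$. One small remark: in that last step you implicitly use $r\ge 2$ so that $d(v)=r/2\ge 1$; the case $r=0$ is trivial (a $P_2$-free graph is edgeless), so this is harmless.
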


\begin{lem}[\cite{ZLG}]\label{lem1}
Let $G =<X, Y>$ be a bipartite graph, where $|X|\geq r$ and $|Y|\geq r-1\geq1$.
If $G$ does not contain a copy of $P_{2r+1}$ with both endpoints in $X$, then
$$e(G)\leq (r-1)|X| + r|Y |-r(r-1).$$
Equality holds if and only if $G\cong K_{|X|,|Y|}$, where $|X| =r$ or $|Y| =r-1$.
\end{lem}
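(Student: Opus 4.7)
The plan is to induct on $|X|+|Y|$ with $r$ fixed, exploiting the bipartite alternation: any copy of $P_{2r+1}$ with both endpoints in $X$ uses exactly $r+1$ vertices of $X$ and $r$ vertices of $Y$. This immediately handles the two boundary cases. If $|X|=r$ then no such path fits, the hypothesis is vacuous, and the trivial bound $e(G)\le r|Y|$ agrees with the claim and is achieved only at $K_{r,|Y|}$; symmetrically, $|Y|=r-1$ forces $e(G)\le (r-1)|X|$ with equality exactly at $K_{|X|,r-1}$. These are the two extremal configurations in the lemma.

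For the inductive step I may assume $|X|\ge r+1$ and $|Y|\ge r$. If some $x\in X$ has $d_G(x)\le r-1$, then $G-x$ still satisfies the hypotheses of the lemma and remains $P_{2r+1}$-free with both endpoints in $X$, so induction gives $e(G-x)\le (r-1)(|X|-1)+r|Y|-r(r-1)$, and adding $d(x)\le r-1$ closes the bound; the same reduction works for a vertex $y\in Y$ of degree at most $r$. It therefore remains to derive a contradiction under the assumption $\delta_G(X)\ge r$ and $\delta_G(Y)\ge r+1$.

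To force a contradiction, take a longest path $P=x_0y_1x_1\cdots y_kx_k$ with both endpoints in $X$, and aim to show $k\ge r$. Suppose $k\le r-1$. Then $|V(P)\cap Y|=k<r\le d_G(x_0)$, so $x_0$ has a neighbour $y^*\in Y\setminus V(P)$; likewise $|V(P)\cap X|=k+1\le r<r+1\le d_G(y^*)$, so $y^*$ has a neighbour $x^*\in X\setminus V(P)$. Then $x^*y^*x_0y_1x_1\cdots y_kx_k$ is a strictly longer such path, contradicting maximality. Hence $k\ge r$ and $G$ contains the forbidden $P_{2r+1}$, closing the edge bound.

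The main obstacle I anticipate is the uniqueness half. Equality in the low-degree reduction forces $d(x)=r-1$ together with $G-x\cong K_{|X|-1,|Y|}$ extremal; combined with $|Y|\ge r$ this compels $|X|-1=r$, i.e.\ $|X|=r+1$. In that configuration I would build the forbidden $P_{2r+1}$ explicitly: start at $x$, move to a neighbour $y^*\in N_Y(x)$ (which exists since $d(x)=r-1\ge 1$), and then use the complete bipartite structure of $G-x$ to alternate greedily through all $r$ vertices of $X\setminus\{x\}$ and a further $r-1$ vertices of $Y\setminus\{y^*\}$ (available because $|Y|\ge r$). This contradicts the hypothesis and, together with the symmetric argument on $Y$, rules out equality in the inductive step, leaving the two base-case complete bipartite graphs as the only extremal examples.
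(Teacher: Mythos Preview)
The paper does not prove this lemma; it is quoted from \cite{ZLG} and used as a black box, so there is no in-paper argument to compare against. Your proposal therefore has to be judged on its own merits.

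Your induction is correct. The base cases $|X|=r$ and $|Y|=r-1$ are exactly the situations in which the forbidden path cannot fit, and the bound collapses to $|X||Y|$, giving both the inequality and the equality characterisation for free. The low-degree deletion step is clean: deleting $x\in X$ with $d(x)\le r-1$ (resp.\ $y\in Y$ with $d(y)\le r$) keeps $|X|\ge r$ (resp.\ $|Y|\ge r-1$), so the inductive hypothesis applies. The path-extension argument in the high-minimum-degree case is also sound: maximality of $P$ among paths with both endpoints in $X$ does not by itself force $N(x_0)\subseteq V(P)$, but your two-step extension through $y^*$ and then $x^*$ recovers a longer path of the required type, which is the right manoeuvre.

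One small remark on the uniqueness half. Your treatment of the $X$-side equality case is complete: $G-x\cong K_{r,|Y|}$, $d(x)=r-1\ge 1$, and the greedy alternating path through $x,y^*$ and all of $X\setminus\{x\}$ produces the forbidden $P_{2r+1}$. The phrase ``the symmetric argument on $Y$'' deserves a sentence more, because the situation is not literally symmetric: the path must still have both endpoints in $X$, so the deleted vertex $y$ (with $d(y)=r\ge 2$) must sit in the interior between two of its neighbours in $X$, while the rest of the path is built inside $K_{|X|,r-1}$ and anchored at some $x_0\in X\setminus N(y)$ (which exists since $|X|\ge r+1$). This works, but it is worth spelling out rather than invoking symmetry.
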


Assume that $H\in\{B_{r+1},\theta_{r+1}\}$ and $G$ is an extremal graph with respect to $ex_{sp}(n,H)$.
Let $X=(x_1,\ldots,x_n)^t$ be the Perron vector of $G$ and $u^\star\in V(G)$ with $x_{u^\star}=\max\{x_i:i=1,\ldots,n\}$.
Let $A=N_G(u^\star)$, $B=V(G)\backslash{(A\cup \{u^\star\})}$ and $$\gamma(u^\star)=|A|+2e(A)+e(A,B).$$
Then we have the following lemma.

\begin{lem}\label{lem3}
If $e(A)=0,$ then $|A|\geq\lceil\frac{n}{2}\rceil$, $\gamma(u^\star)\geq\lfloor\frac{n^2}{4}\rfloor$ and $G\cong T_{n,2}$
\end{lem}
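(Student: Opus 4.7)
The approach is to derive all three conclusions purely by Perron-vector manipulations, invoking $H$-freeness only through the baseline inequality $\rho(G) \geq \rho(T_{n,2}) = \sqrt{\lfloor n^2/4\rfloor}$. This baseline holds because $T_{n,2}$ is bipartite while both $B_{r+1}$ and $\theta_{r+1}$ contain a triangle, so $T_{n,2}$ is $H$-free and competes against the spectral extremal $G$. For the first conclusion, the eigenvalue equation at $u^\star$ gives
$$\rho(G)\,x_{u^\star} \;=\; \sum_{v \in A} x_v \;\leq\; |A|\,x_{u^\star},$$
so $|A| \geq \rho(G) \geq \sqrt{\lfloor n^2/4\rfloor}$. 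A quick parity check shows $\sqrt{\lfloor n^2/4\rfloor} > \lceil n/2\rceil - 1$ (for even $n$ it equals $n/2$, for odd $n=2k{+}1$ it equals $\sqrt{k(k{+}1)}>k$), so the integrality of $|A|$ forces $|A| \geq \lceil n/2\rceil$.

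For the second conclusion, I would compute $\rho(G)^2 x_{u^\star}$ by a two-step walk count: rewriting $\sum_{v \in A}\rho x_v = \sum_w d_A(w)\,x_w$, using $e(A)=0$ to kill the contribution from $A$, and using $x_w \leq x_{u^\star}$ on $B$, one obtains
$$\rho(G)^2\,x_{u^\star} \;=\; |A|\,x_{u^\star} + \sum_{w \in B} d_A(w)\,x_w \;\leq\; \bigl(|A|+e(A,B)\bigr)\,x_{u^\star} \;=\; \gamma(u^\star)\,x_{u^\star}.$$
Combined with $\rho(G)^2 \geq \lfloor n^2/4\rfloor$, this yields $\gamma(u^\star) \geq \lfloor n^2/4\rfloor$.

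The third conclusion is where the real work lies. I would squeeze $\gamma(u^\star)$ from above: since $e(A)=0$, we have $\gamma(u^\star) = |A| + e(A,B) \leq |A|+|A||B| = |A|(n-|A|) \leq \lfloor n^2/4\rfloor$, so every inequality in the chain must be an equality. This forces $|A| = \lceil n/2\rceil$, $e(A,B) = |A||B|$ (so $A$ and $B$ are completely joined), and---via the equality case of the Perron estimate---$x_w = x_{u^\star}$ for every $w \in B$. The main remaining obstacle is to rule out edges inside $B$: for any $w \in B$, applying the eigenvalue equation at $w$ and using $u^\star \not\sim w$ together with $A \subseteq N(w)$ gives
$$\rho(G)\,x_w \;=\; \sum_{v \in A} x_v + \sum_{\substack{w'\in B\\ w' \sim w}} x_{w'} \;=\; \rho(G)\,x_{u^\star} + \sum_{\substack{w'\in B\\ w'\sim w}} x_{w'}.$$
Since $x_w = x_{u^\star}$, the second sum must vanish; positivity of $X$ then forces $d_B(w)=0$. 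Hence $e(B)=0$, and $G$ is the complete bipartite graph on parts $A$ and $\{u^\star\}\cup B$ of sizes $\lceil n/2\rceil$ and $\lfloor n/2\rfloor$, i.e.\ $G \cong T_{n,2}$. No structural $H$-freeness beyond the triangle-containment used in the baseline is needed inside the lemma itself.
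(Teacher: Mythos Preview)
Your argument is correct. The derivations of $|A|\ge\lceil n/2\rceil$ and $\gamma(u^\star)\ge\lfloor n^2/4\rfloor$ match the paper's exactly (the paper phrases the integrality step as $\rho(G)>(n-1)/2$, but this is the same observation). The squeeze $\gamma(u^\star)=|A|+e(A,B)\le |A|(|B|+1)\le\lfloor n^2/4\rfloor$ and the resulting equalities $|A|=\lceil n/2\rceil$, $e(A,B)=|A||B|$ are also identical to the paper.

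Where you diverge is in ruling out edges inside $B$. The paper simply says ``$e(B)=0$, since $G$ is either $B_{r+1}$-free or $\theta_{r+1}$-free'': once $T_{n,2}$ sits inside $G$ as a spanning subgraph, any edge $w_1w_2$ in $B$ has all of $A$ as common neighbours, which immediately produces a $B_{r+1}$ (and, with a short alternating-path construction through the complete bipartite part, a $\theta_{r+1}$). You instead extract from the equality chain that $x_w=x_{u^\star}$ for every $w\in B$, and then compare the eigen-equation at $w$ with that at $u^\star$ to force $\sum_{w'\in N_B(w)}x_{w'}=0$; since each such $x_{w'}=x_{u^\star}>0$, this gives $d_B(w)=0$. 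This is a genuinely different finish: it uses only the Perron vector and never touches the structure of $H$ beyond the baseline $\rho(G)\ge\rho(T_{n,2})$. The payoff is that your lemma becomes independent of which $H\in\{B_{r+1},\theta_{r+1}\}$ is forbidden (and would work verbatim for any triangle-containing $H$), whereas the paper's one-line appeal to $H$-freeness is shorter but implicitly relies on a small case check for $\theta_{r+1}$. One cosmetic remark: you invoke ``positivity of $X$'', but all you actually need---and all you have, absent connectedness---is $x_{w'}=x_{u^\star}>0$ for $w'\in B$, which you have already established.
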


\begin{proof}
Since $T_{n,2}$ is $H$-free and $G$ attains the maximum spectral radius, we have
$$\rho(G)\geq\rho(T_{n,2})=\sqrt{\lfloor\frac n2\rfloor\lceil\frac n 2\rceil}=\sqrt{\lfloor\frac{n^2}{4}\rfloor}>\frac{n-1}{2}.$$
Note that $\rho(G)x_{u^\star}=\sum_{u\in A}x_u\leq |A|x_{u^\star}$.
Then $|A|\geq \rho(G)>\frac{n-1}{2},$
which implies that
\begin{eqnarray}\label{eq1}
|A|\geq \lceil\frac{n}{2}\rceil.
\end{eqnarray}
By using eigen-equation again, we have
\begin{eqnarray*}
\rho^2(G)x_{u^\star}&=&\rho(G)\sum_{u\sim u^\star}x_u=\sum_{u\sim u^\star}\sum_{v\sim u}x_v\\
&=&|A|x_{u^\star}+\sum_{u\in A}d_A(u)x_u+\sum_{v\in B}d_A(v)x_v\\
&\leq&|A|x_{u^\star}+2e(A)x_{u^\star}+e(A,B)x_{u^\star}\\
&=&\gamma(u^\star)x_{u^\star}.
\end{eqnarray*}
It follows that
\begin{eqnarray}\label{eq2}
\gamma(u^\star)\geq \rho^2(G)\geq\lfloor\frac{n^2}{4}\rfloor.
\end{eqnarray}
Now if $e(A)=0$, then
\begin{eqnarray}\label{eq3}
\gamma(u^\star)=|A|+e(A,B)\leq |A|+|A||B|=|A|(|B|+1)\leq\lfloor\frac{n^2}{4}\rfloor,
\end{eqnarray}
since $|A|+(|B|+1)=n$. Combining with (\ref{eq2}), we have $\gamma(u^\star)=\lfloor\frac{n^2}{4}\rfloor.$
Consequently, all the inequalities in (\ref{eq3}) become equalities.
Therefore, $e(A,B)=|A||B|$. Moreover, combining with (\ref{eq1}) we have $|A|=\lceil\frac{n}{2}\rceil$
and $|B|+1=\lfloor\frac n2\rfloor$.
This implies that $G$ contains $T_{n,2}$ as a spanning subgraph.
Furthermore, $e(B)=0$, since $G$ is either $B_{r+1}$-free or $\theta_{r+1}$-free.
Thus, $G\cong T_{n,2}$, as desired.
\end{proof}


\section{Spectral extrema for books}\label{se3}

In this section, we give the proof of Theorem \ref{thm1}.
Suppose that $G$ is an extremal graph with respect to $ex_{sp}(n,B_{r+1})$.
Let $X=(x_1,\ldots,x_n)^t$ be the Perron vector of $G$ and $u^\star$ be a vertex with $x_{u^\star}=\max\{x_i:i=1,\ldots,n\}.$
Since $T_{n,2}$ has no $B_{r+1}$, for $n\geq\frac{13} 2r$ we have
\begin{eqnarray}\label{eq-1}
\rho(G)\geq\rho(T_{n,2})=\sqrt{\lfloor\frac{n^2}{4}\rfloor}>3r.
\end{eqnarray}

\noindent{\bf Proof of Theorem \ref{thm1}:}
\vspace{2mm}

Let $A=N_G(u^\star)$ and $B=V(G)\backslash{(A\cup \{u^\star\})}$.
By Lemma \ref{lem3}, if $e(A)=0$, then $G\cong T_{n,2}$ and the theorem holds.
In the following suppose that $e(A)\neq0$. We first give some claims.

\begin{claim}\label{cla0} $d_{A}(u)\leq r$ for each $u\in A$.
\end{claim}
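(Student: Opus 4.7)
The plan is to argue directly by contradiction, exploiting the structural definition of $B_{r+1}$ as $r+1$ triangles sharing a common edge. The key observation is that every vertex of $A$ is by definition adjacent to $u^\star$, so if some $u\in A$ has too many neighbors inside $A$ itself, then the edge $uu^\star$ together with these common neighbors already supplies a book.

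More concretely, suppose for contradiction that there exists $u\in A$ with $d_A(u)\geq r+1$. Pick $r+1$ distinct vertices $v_1,\ldots,v_{r+1}\in N_A(u)$. Since $A=N_G(u^\star)$, each $v_i$ is adjacent to $u^\star$; the vertex $u$ is also adjacent to $u^\star$ for the same reason; and by choice, $u\sim v_i$ for every $i$. Hence the edge $uu^\star$ lies in the $r+1$ triangles $uu^\star v_i$ ($i=1,\ldots,r+1$), which means $\{u,u^\star,v_1,\ldots,v_{r+1}\}$ spans a subgraph containing $B_{r+1}$, contradicting the fact that $G$ is $B_{r+1}$-free. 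This forces $d_A(u)\leq r$ for every $u\in A$.

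There is really no obstacle here: the claim is a purely combinatorial consequence of the definition of the book and the choice of $A$, and uses neither the Perron vector, the spectral radius bound $(\ref{eq-1})$, nor Lemma \ref{lem3}. The spectral machinery and the hypothesis $e(A)\neq 0$ will only enter in the subsequent claims, where one needs to control $\gamma(u^\star)$ and extract the extremal structure; Claim \ref{cla0} just installs the degree cap inside $A$ that those later steps will exploit (for instance, by combining with the Erd\H{o}s--Gallai bound of Lemma \ref{lem2} or the bipartite $P_{2r+1}$-bound of Lemma \ref{lem1} applied to the bipartite graph between $A$ and $B$).
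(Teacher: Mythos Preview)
Your argument is correct and is exactly the reasoning the paper has in mind: the paper's proof is the one-line remark ``Since $G$ is $B_{r+1}$-free, the claim holds immediately,'' and you have simply spelled out why that one line works.
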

\begin{proof} Since $G$ is $B_{r+1}$-free, the claim holds immediately.
\end{proof}

\begin{claim}\label{cla1} $d_B(u_1)+d_B(u_2)\leq |B|+r-1$ for any $u_1u_2\in E(A)$.
\end{claim}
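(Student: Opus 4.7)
The plan is to exploit the forbidden book structure together with the fact that $u^\star$ is a free common neighbor of $u_1$ and $u_2$. Since $u_1,u_2\in A=N_G(u^\star)$, the vertex $u^\star$ automatically lies in $N_G(u_1)\cap N_G(u_2)$.

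First I would bound the total common neighborhood. Because $G$ is $B_{r+1}$-free and $u_1u_2$ is an edge, there cannot be $r+1$ vertices each adjacent to both $u_1$ and $u_2$ (otherwise $u_1u_2$ together with these $r+1$ common neighbors spans a copy of $B_{r+1}$). Hence
\begin{equation*}
|N_G(u_1)\cap N_G(u_2)|\le r.
\end{equation*}

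Next, since $u^\star$ is one of these common neighbors and $u^\star\notin B$, I would subtract it off to obtain the restricted bound
\begin{equation*}
|N_B(u_1)\cap N_B(u_2)|\le r-1.
\end{equation*}

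Finally, a one-line inclusion-exclusion finishes the job: applying it inside the set $B$,
\begin{equation*}
d_B(u_1)+d_B(u_2)=|N_B(u_1)\cup N_B(u_2)|+|N_B(u_1)\cap N_B(u_2)|\le |B|+(r-1),
\end{equation*}
which is exactly the claim. There is no real obstacle here; the only substantive idea is to notice that the maximum-Perron vertex $u^\star$ acts as a ``free'' triangle-apex on the edge $u_1u_2$, which is what allows the $-1$ improvement over the naive bound $|B|+r$ and will later be essential when summing $\gamma(u^\star)$ against the forbidden-book constraint.
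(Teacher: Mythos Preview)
Your proof is correct and follows essentially the same approach as the paper: inclusion--exclusion on $N_B(u_1)$ and $N_B(u_2)$ together with the $B_{r+1}$-free constraint on the edge $u_1u_2$. The only difference is presentational---the paper argues by contradiction (assuming $d_B(u_1)+d_B(u_2)\ge |B|+r$ forces $|N_B(u_1)\cap N_B(u_2)|\ge r$, whence with $u^\star$ one obtains a $B_{r+1}$), whereas you proceed directly and make the role of $u^\star$ as the extra common neighbor explicit; the content is the same.
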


\begin{proof}
By the way of contradiction, assume that $d_B(u_1)+d_B(u_2)\geq |B|+r$ for some $u_1u_2\in E(A)$.
Note that
\begin{eqnarray*}
|B|+r&\leq& d_B(u_1)+d_B(u_2)=|N_B(u_1)\cup N_B(u_2)|+|N_B(u_1)\cap N_B(u_2)|\\
&\leq& |B|+|N_B(u_1)\cap N_B(u_2)|.
\end{eqnarray*}
Then $|N_B(u_1)\cap N_B(u_2)|\geq r$, and $G$ contains a $B_{r+1}$, a contradiction.
\end{proof}

\begin{claim}\label{cla2}  $x_{u_1}+x_{u_2}\leq \frac{|B|+r+1}{\rho(G)-r}x_{u^\star}$ for any $u_1u_2\in E(A)$.
\end{claim}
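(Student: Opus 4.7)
The plan is to apply the Perron eigen-equation at both endpoints of the edge $u_1u_2\in E(A)$ and combine with Claims \ref{cla0} and \ref{cla1}. For $i\in\{1,2\}$,
\[
\rho(G)\,x_{u_i}=x_{u^\star}+\sum_{v\in N_A(u_i)}x_v+\sum_{v\in N_B(u_i)}x_v.
\]
The factor $\rho(G)-r$ appearing on the left of Claim \ref{cla2} signals that one should push $r$ copies of $x_{u_i}$ across the equals sign via the refined estimate $\sum_{v\in N_A(u_i)}x_v\le r\,x_{u_i}$. Granting this, the $N_B$-contribution can be bounded by $d_B(u_i)\,x_{u^\star}$ (using $x_v\le x_{u^\star}$), and rearranging yields
\[
(\rho(G)-r)\,x_{u_i}\;\le\;(1+d_B(u_i))\,x_{u^\star}.
\]

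Summing for $i=1,2$ and applying Claim \ref{cla1} in the form $d_B(u_1)+d_B(u_2)\le|B|+r-1$ produces
\[
(\rho(G)-r)(x_{u_1}+x_{u_2})\;\le\;\bigl(2+d_B(u_1)+d_B(u_2)\bigr)x_{u^\star}\;\le\;(|B|+r+1)\,x_{u^\star},
\]
and dividing by $\rho(G)-r$, which is positive by (\ref{eq-1}) (in particular $\rho(G)>3r$), delivers Claim \ref{cla2}.

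The principal obstacle is justifying the $r$-packaging step $\sum_{v\in N_A(u_i)}x_v\le r\,x_{u_i}$. The crude bound $x_v\le x_{u^\star}$ together with $|N_A(u_i)|\le r$ (Claim \ref{cla0}) yields only $r\,x_{u^\star}$, producing merely the weaker inequality $(\rho(G)-1)(x_{u_1}+x_{u_2})\le(|B|+3r-1)\,x_{u^\star}$; a comparison of constants shows that in our regime $n\ge\tfrac{13}{2}r$ this is genuinely less sharp than Claim \ref{cla2}. A natural way to close the gap is to choose $u_1$ (and symmetrically $u_2$) to be a vertex of maximum Perron weight inside its $A$-neighborhood, so that $x_v\le x_{u_i}$ holds uniformly on $N_A(u_i)$; an alternative is a symmetric averaging of the $N_A$-contributions across the two endpoints of the edge that leverages the $B_{r+1}$-free structure more carefully. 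I expect this $r$-packaging to constitute the main technical work of the proof.
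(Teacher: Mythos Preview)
Your outline is correct up to and including the identification of the obstacle, but the resolution you propose does not work, and the inequality $\sum_{v\in N_A(u_i)}x_v\le r\,x_{u_i}$ is in general false: nothing prevents $u_i$ from having small Perron weight while its $A$-neighbours have large weight. Choosing $u_1$ to have maximum weight among its own $A$-neighbours does not help either, because the claim must hold for \emph{every} edge of $E(A)$, not only a specially selected one, and for a given edge you cannot impose this extremal property at both endpoints simultaneously.

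The paper's device is different and a little subtle: one first fixes the edge $uv\in E(A)$ that \emph{maximises} $x_u+x_v$ over all edges of $E(A)$, and proves the bound for this edge only (which then automatically implies it for all edges). For this particular edge, every $w\in N_A(u)$ satisfies $uw\in E(A)$, hence $x_u+x_w\le x_u+x_v$, i.e.\ $x_w\le x_v$. Together with $|N_A(u)|\le r$ (Claim~\ref{cla0}) this gives the \emph{cross} estimate
\[
\sum_{w\in N_A(u)}x_w\le r\,x_v,\qquad\text{and symmetrically}\qquad\sum_{w\in N_A(v)}x_w\le r\,x_u.
\]
Adding the two eigen-equations now yields $(\rho(G)-r)(x_u+x_v)\le(2+d_B(u)+d_B(v))\,x_{u^\star}\le(|B|+r+1)\,x_{u^\star}$ via Claim~\ref{cla1}, exactly as you wanted. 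The key point is that the ``$r$-packaging'' is achieved not by bounding each $N_A$-sum by $r$ times its \emph{own} endpoint, but by $r$ times the \emph{other} endpoint---and this is only legitimate once the edge has been chosen to maximise the sum of weights.
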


\begin{proof}
Suppose that $x_u+x_v=\max_{uv\in E(A)}{(x_u+x_v)}$.
Now we only need to show that $x_u+x_v\leq \frac{|B|+r+1}{\rho(G)-r}x_{u^\star}$.
By Claim \ref{cla0}, we have
$$\rho(G)x_u=x_{u^\star}+\sum_{w\in N_A(u)}x_{w}+\sum_{w\in N_B(u)}x_{w}\leq x_{u^\star}+rx_{v}+d_B(u)x_{u^\star}$$
and
$$\rho(G)x_v=x_{u^\star}+\sum_{w\in N_A(v)}x_{w}+\sum_{w\in N_B(v)}x_{w}\leq x_{u^\star}+rx_{u}+d_B(v)x_{u^\star}.$$
Combining the above two inequalities and using Claim \ref{cla1}, we have
$$(\rho(G)-r)(x_u+x_v)\leq 2x_{u^\star}+(d_B(u)+d_B(v))x_{u^\star}\leq (|B|+r+1)x_{u^\star}.$$
Then the claim holds.
\end{proof}
We use $\overline{e(A,B)}$ to denote the number of edges missing in $E(A,B)$, that is, $\overline{e(A,B)}=|A||B|-e(A,B)$.
For the sake of simplicity, we use $\overline{d_B}(u)$ instead of $\overline{e(\{u\},B)}$, where $u\in A$. Now we have the following claim.

\begin{claim}\label{cla3}  $\overline{e(A,B)}\geq \frac{1}{r}e(A)(|B|-r+1)$.
\end{claim}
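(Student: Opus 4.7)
The plan is to prove this by a standard double-counting argument that combines Claim~\ref{cla0} (the degree bound $d_A(u)\leq r$ inside $A$) with Claim~\ref{cla1} (the pairwise degree bound into $B$ along an edge of $E(A)$). The key observation is that Claim~\ref{cla1} translates directly into a lower bound on the number of \emph{non}-edges from the endpoints of an edge of $A$ into $B$, while Claim~\ref{cla0} gives an upper bound on how many times a vertex of $A$ appears in such a sum.

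First I would rewrite Claim~\ref{cla1} in complementary form: for any $u_1u_2\in E(A)$,
\[
\overline{d_B}(u_1)+\overline{d_B}(u_2)=2|B|-\bigl(d_B(u_1)+d_B(u_2)\bigr)\geq |B|-r+1.
\]
Next I would sum this inequality over all edges $u_1u_2\in E(A)$, obtaining
\[
\sum_{u_1u_2\in E(A)}\bigl(\overline{d_B}(u_1)+\overline{d_B}(u_2)\bigr)\geq e(A)\,(|B|-r+1).
\]
The left-hand side can be rearranged by counting the contribution of each vertex: it equals $\sum_{u\in A} d_A(u)\,\overline{d_B}(u)$. Using Claim~\ref{cla0} to bound $d_A(u)\leq r$, this sum is at most
\[
r\sum_{u\in A}\overline{d_B}(u)=r\cdot\overline{e(A,B)}.
\]
Putting the two estimates together yields $r\cdot\overline{e(A,B)}\geq e(A)(|B|-r+1)$, which is the claimed bound.

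There is no real obstacle here: the proof is just a clean application of the two preceding claims via double counting, and does not seem to need the spectral hypothesis $\rho(G)\geq\rho(T_{n,2})$ or the extremality of $G$ at all. The only mild subtlety is the bookkeeping that turns the edgewise sum into a vertex sum weighted by $d_A(u)$, but that is routine. One should just make sure implicitly that $|B|-r+1\geq 0$ is not needed for the inequality to be meaningful (if $|B|<r-1$ the right-hand side is negative and the claim is trivial).
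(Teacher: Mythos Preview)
Your proof is correct and follows essentially the same double-counting argument as the paper: both rewrite Claim~\ref{cla1} in complementary form, sum over all edges of $E(A)$, and then use Claim~\ref{cla0} to bound $\sum_{u_1u_2\in E(A)}(\overline{d_B}(u_1)+\overline{d_B}(u_2))=\sum_{u\in A}d_A(u)\,\overline{d_B}(u)\leq r\cdot\overline{e(A,B)}$. Your added remark about the case $|B|<r-1$ being trivial is a nice touch the paper omits.
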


\begin{proof}
By Claim \ref{cla1}, $d_B(u_1)+d_B(u_2)\leq |B|+r-1$ for $u_1u_2\in E(A)$.
Then $$\overline{d_B}(u_1)+\overline{d_B}(u_2)\geq 2|B|-(|B|+r-1)=|B|-r+1.$$
For each $u\in A$,
$\overline{d_B}(u)$ appears at most $r$ times in $\sum_{u_1u_2\in E(A)}\left(\overline{d_B}(u_1)+\overline{d_B}(u_2)\right)$,
since $d_A(u)\leq r$.
Thus
\begin{eqnarray*}
\overline{e(A,B)}=\sum_{u\in A}\overline{d_B}(u)\geq\frac{1}{r}\sum_{u_1u_2\in E(A)}\left(\overline{d_B}(u_1)+\overline{d_B}(u_2)\right)\geq\frac{1}{r}e(A)(|B|-r+1),
\end{eqnarray*}
as required.
\end{proof}
\vspace{3mm}

Now we give the final proof of Theorem \ref{thm1}.
Since $\rho(G)x_{v}=\sum_{u\sim v}x_u,$ it follows that
\begin{eqnarray*}
\rho^2(G)x_{u^\star}&=&\rho(G)\sum_{u\sim u^\star}x_u=\sum_{u\sim u^\star}\sum_{v\sim u}x_v\\
&=&|A|x_{u^\star}+\sum_{u\in A}d_A(u)x_u+\sum_{v\in B}d_A(v)x_v\\
&=&|A|x_{u^\star}+\sum_{u_1u_2\in E(A)}(x_{u_1}+x_{u_2})+\sum_{v\in B}d_A(v)x_v\\
&\leq&|A|x_{u^\star}+e(A)\frac{|B|+r+1}{\rho(G)-r}x_{u^\star}+e(A,B)x_{u^\star}~~\mbox{by Claim \ref{cla2}}.
\end{eqnarray*}
Note that $e(A,B)=|A||B|-\overline{e(A,B)}$. Then by Claim \ref{cla3},
\begin{equation}\label{eq0}\begin{array}{ll}
\rho^2(G)&\leq |A|+e(A)\frac{|B|+r+1}{\rho(G)-r}+|A||B|-\frac{1}{r}e(A)(|B|-r+1)\\
&= |A|(|B|+1)+\left(\frac{|B|+r+1}{\rho(G)-r}-\frac{|B|-r+1}{r}\right)e(A).
\end{array}\end{equation}

If $\rho(G)>r\left(1+\frac{|B|+r+1}{|B|-r+1}\right)$, then $\frac{|B|+r+1}{\rho(G)-r}<\frac{|B|-r+1}{r}$ and thus
$\rho^2(G)<|A|(|B|+1)\leq \lfloor\frac{n^2}{4}\rfloor,$
which contradicts (\ref{eq-1}).

If
$\rho(G)=r\left(1+\frac{|B|+r+1}{|B|-r+1}\right)$, then $\rho^2(G)\leq|A|(|B|+1)\leq \lfloor\frac{n^2}{4}\rfloor.$
Combining (\ref{eq-1}), we have $\rho^2(G)=\lfloor\frac{n^2}{4}\rfloor.$
From Lemma \ref{lem3}, it leads to $|A|=\lceil\frac{n}{2}\rceil$ and $|B|+1=\lfloor\frac{n}{2}\rfloor$.
Furthermore, by $\rho(G)=r\left(1+\frac{|B|+r+1}{|B|-r+1}\right)=\frac {2r(|B|+1)}{|B|-r+1}$, then
\begin{eqnarray}\label{eq4}
\rho^2(G)
=\left(\frac{2r}{\lfloor\frac{n}{2}\rfloor-r}\right)^2{\lfloor\frac{n}{2}\rfloor}^2
\leq \left(\frac{2r}{\lfloor\frac{n}{2}\rfloor-r}\right)^2\lfloor\frac{n^2}{4}\rfloor
=\left(\frac{2r}{\lfloor\frac{n}{2}\rfloor-r}\right)^2\rho^2(G)\leq\rho^2(G),
\end{eqnarray}
since $n\geq \frac{13}2r$. Therefore, $\lfloor\frac{n}{2}\rfloor^2=\lfloor\frac{n^2}{4}\rfloor$, which implies that $n$ is even.
Consequently, $$\frac{2r}{\lfloor\frac{n}{2}\rfloor-r}=\frac{2r}{\frac{n}{2}-r}\leq \frac{8}{9}.$$ Thus, (\ref{eq4}) becomes a strict inequality,
also a contradiction.

If $\rho(G)<r\left(1+\frac{|B|+r+1}{|B|-r+1}\right)$, then $\frac{|B|+r+1}{\rho(G)-r}>\frac{|B|-r+1}{r}$. By (\ref{eq0}), we have
\begin{eqnarray*}
\rho^2(G)&\leq& |A|(|B|+1)+\left(\frac{|B|+r+1}{\rho(G)-r}-\frac{|B|-r+1}{r}\right)\frac{r}{2}|A|~~\mbox{by Claim \ref{cla0}}\\
&<&|A|(|B|+1)+\left(\frac{|B|+r+1}{2r}-\frac{|B|-r+1}{r}\right)\frac{r}{2}|A|~~\mbox{since $\rho(G)>3r$ by (\ref{eq-1})}\\
&=&\frac{3}{4}|A|(|B|+1+r)\leq\frac{3}{4}\lfloor\frac{(|A|+|B|+1+r)^2}{4}\rfloor\\
&=&\frac{3}{4}\lfloor\frac{(n+r)^2}{4}\rfloor~~\mbox{since $|A|+|B|+1=n$}\\
&\leq&\lfloor\frac{n^2}{4}\rfloor~~\mbox{as $n\geq \frac{13}{2}r$},
\end{eqnarray*}
a contradiction. This completes the proof.
\hfill{\rule{4pt}{8pt}}

\section{Spectral extrema for theta graphs}\label{se4}

In this section, we give the proof of Theorem \ref{thm2}.
Suppose that $G$ is an extremal graph with respect to $ex_{sp}(n,\theta_{r+1})$.
Let $X=(x_1,\ldots,x_n)^t$ be the Perron vector of $G$ and $u^\star\in V(G)$ with $x_{u^\star}=\max\{x_i:i=1,\ldots,n\}$.
Let $A=N_G(u^\star)$, $B=V(G)\setminus {(A\cup\{u^\star\})}$ and $$\gamma(u^\star)=|A|+2e(A)+e(A,B).$$

\noindent{\bf Proof of Theorem \ref{thm2}.}
We derive the proof by the following two cases.

\noindent{\bf Case 1. $r$ is odd.}
\vspace{1mm}

Now $n\geq10r$. If $r=1$, then $\theta_{r+1}\cong B_{r+1}$. By Theorem \ref{thm1},
$Ex_{sp}(n,\theta_2)=\{T_{n,2}\}$ for $n\geq7.$ In the following, we may assume that $r\geq3$.
We first give several claims.

\begin{claim}\label{cla4} $|B|\geq 2r$. Moreover, $e(H)\leq \frac{r}{2}|H|$ for each component $H$ of $G[A]$.
\end{claim}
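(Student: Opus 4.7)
The guiding principle is that $u^\star$ is universal on $A$, so any edge $uv\in E(G[A])$ together with a $uv$-path of length $r+1$ in $G-u^\star$ automatically produces a $\theta_{r+1}\subseteq G$ (with $u^\star$ as the middle vertex of the length-$2$ side). I would use this to prove the edge bound by showing that $G[A]$ itself is $P_{r+2}$-free. Suppose for contradiction that $v_0 v_1\cdots v_{r+1}$ is a $P_{r+2}$ in $G[A]$. Taking $u^\star$ and $v_1$ as the prospective endpoints of a $\theta_{r+1}$, the three paths
\[
u^\star v_1,\quad u^\star v_0 v_1,\quad u^\star v_{r+1}v_r\cdots v_2 v_1
\]
have lengths $1$, $2$ and $r+1$ respectively, use only edges of $G$ (since $u^\star$ is adjacent to every $v_j\in A$ and consecutive $v_j,v_{j+1}$ are adjacent along the path), and their internal-vertex sets $\emptyset,\{v_0\},\{v_{r+1},\ldots,v_2\}$ are pairwise disjoint. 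This gives $\theta_{r+1}\subseteq G$, a contradiction. Hence each component $H$ of $G[A]$ is $P_{r+2}$-free, and Lemma~\ref{lem2} (Erd\H{o}s-Gallai) immediately yields $e(H)\le \tfrac{r}{2}|H|$.

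For $|B|\ge 2r$, I would argue by contradiction. Suppose $|B|\le 2r-1$, so $|A|\ge n-2r\ge 8r$. From the proof of Lemma~\ref{lem3} we have $\rho(G)^2\le \gamma(u^\star)=|A|+2e(A)+e(A,B)$ and $\rho(G)^2\ge\lfloor n^2/4\rfloor$. The edge bound just proved, summed over components, gives $2e(A)\le r|A|$, so
\[
\left\lfloor \tfrac{n^2}{4}\right\rfloor\le (r+1)|A|+e(A,B),
\]
and the naive $e(A,B)\le |A||B|$ only gives $n<12r$. To close the gap I would apply Lemma~\ref{lem1} to the bipartite subgraph $(A,B)$: $P_{r+2}$-freeness of $G[A]$, combined with $u^\star$-closure, should preclude a $P_{2r+1}$ in $(A,B)$ with both endpoints in $A$, so Lemma~\ref{lem1} yields $e(A,B)\le (r-1)|A|+r|B|-r(r-1)$. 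Substituting produces $\lfloor n^2/4\rfloor\le 2r|A|+r|B|-r(r-1)\le 2r(n-1)$, i.e.\ $n<8r$, contradicting $n\ge 10r$.

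\textbf{Main obstacle.} The edge bound is clean — the three-path construction with endpoint $v_1$ is explicit. The delicate step is verifying the hypothesis of Lemma~\ref{lem1} on $(A,B)$: one must check that any $P_{2r+1}$ in $(A,B)$ with both endpoints in $A$ can be spliced with an edge of $G[A]$ and the vertex $u^\star$ to manufacture a forbidden $\theta_{r+1}$, and the combinatorics of this splicing depends on the parity of $r$. This parity sensitivity is exactly what accounts for the asymmetric thresholds $n\ge 10r$ (odd $r$) vs.\ $n\ge 7r$ (even $r$) in Theorem~\ref{thm2}.
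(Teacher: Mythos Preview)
Your argument for the edge bound is correct and is exactly the paper's: $G[A]$ is $P_{r+2}$-free, so Erd\H{o}s--Gallai gives $e(H)\le\frac r2|H|$ for every component $H$.

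For $|B|\ge 2r$, however, you dismiss the naive bound too quickly. From $2e(A)\le r|A|$ and $e(A,B)\le|A||B|$ you get
\[
\gamma(u^\star)\le(1+r+|B|)\,|A|=(1+r+|B|)(n-1-|B|).
\]
You then bound the two factors separately by $3r$ and (essentially) $n$, which indeed only yields $n<12r$. But $|A|+|B|+1=n$, so the right-hand side is a downward parabola in $|B|$ with vertex at $|B|=\tfrac{n-2-r}{2}>2r-1$ (since $n\ge10r$); hence on the range $|B|\le 2r-1$ it is maximised at $|B|=2r-1$, giving $\gamma(u^\star)\le 3r(n-2r)$. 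A direct check shows $3r(n-2r)<\lfloor n^2/4\rfloor$ whenever $n\ge10r$, which is the required contradiction. This is precisely the paper's proof; no appeal to Lemma~\ref{lem1} is needed.

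Your proposed detour through Lemma~\ref{lem1} is not only unnecessary but unjustified. For odd $r$, an $A$-path of length $2r$ together with $u^\star$ does \emph{not} by itself force a $\theta_{r+1}$: any path from $u^\star$ to a vertex of $A$ built from edges to $u^\star$ and segments of the $A$-path has odd length, whereas $r+1$ is even. So the hypothesis of Lemma~\ref{lem1} cannot be verified in the way you sketch, and in fact the very next claim in the paper shows that long $A$-paths \emph{do} exist in $G$. The ``parity obstacle'' you identify is real, but it is irrelevant to Claim~\ref{cla4}; it only enters later in the argument.
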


\begin{proof}
Since $G$ is $\theta_{r+1}$-free, $G[A]$ is $P_{r+2}$-free.
By Lemma \ref{lem2}, $e(H)\leq \frac{r}{2}|H|$ for each component $H$ of $G[A]$.
It follows that $e(A)\leq\frac{r}{2}|A|$ and then
 $$\gamma(u^\star)= |A|+2e(A)+e(A,B)\leq (1+r+|B|)|A|=(1+r+|B|)(n-1-|B|).$$
Suppose that $|B|\leq 2r-1$. Since $n\geq10r$, $\gamma(u^\star)\leq 3r\cdot8r=24r^2<\lfloor\frac {n^2}4\rfloor,$
which contradicts (\ref{eq2}). Therefore, $|B|\geq 2r$.
\end{proof}

An \emph{$A$-path} of $G$ is a path consisting of edges in $E(A,B)$ and with both endpoints in $A$.
The length of a path $P$ is denoted by $l(P)$.

\begin{claim}\label{cla5} There exists an $A$-path of length $3r-1$ in $G$.
\end{claim}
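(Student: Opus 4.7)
The plan is to assume for contradiction that $G$ has no $A$-path of length $3r-1$ and derive an upper bound on $\gamma(u^\star)$ contradicting the spectral lower bound $\gamma(u^\star)\geq \lfloor n^2/4\rfloor$ from~(\ref{eq2}). The bridge is Lemma~\ref{lem1}: an $A$-path of length $3r-1$ is precisely a $P_{3r}$ in the bipartite subgraph $H=(A,B;\,E(A,B))$ with both endpoints in the part $A$. Since $r$ is odd, $3r-1$ is even, so $r_0:=(3r-1)/2$ is a positive integer (at least $4$ since $r\geq 3$), and the forbidden path becomes the $P_{2r_0+1}$ with both endpoints in $X=A$ appearing in Lemma~\ref{lem1}.

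Before invoking the lemma I would verify its hypotheses. The bound $|A|\geq r_0$ follows from the Perron inequality $|A|\geq \rho(G)\geq \rho(T_{n,2})>(n-1)/2\geq (10r-1)/2$, which exceeds $r_0=(3r-1)/2$; the bound $|B|\geq r_0-1$ follows from Claim~\ref{cla4}, since $|B|\geq 2r\geq (3r-3)/2$ for $r\geq 3$. Under the contradiction hypothesis, Lemma~\ref{lem1} then yields
\[
e(A,B)\leq \tfrac{3r-3}{2}|A|+\tfrac{3r-1}{2}|B|-\tfrac{(3r-1)(3r-3)}{4},
\]
which, combined with $2e(A)\leq r|A|$ from Claim~\ref{cla4}, gives
\[
\gamma(u^\star)\leq \tfrac{5r-1}{2}|A|+\tfrac{3r-1}{2}|B|-\tfrac{(3r-1)(3r-3)}{4}.
\]

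To close the argument, I would maximize the right-hand side subject to $|A|+|B|=n-1$ and $|B|\geq 2r$ (Claim~\ref{cla4}). Since the coefficient of $|A|$ strictly exceeds that of $|B|$, the maximum is attained at $(|A|,|B|)=(n-1-2r,\,2r)$ and is linear in $n$, whereas $\lfloor n^2/4\rfloor$ grows quadratically; hence the required inequality $\gamma(u^\star)<\lfloor n^2/4\rfloor$ is tightest at the boundary $n=10r$, where a routine substitution yields a gap of order $\Theta(r^2)$, contradicting~(\ref{eq2}). I expect the main difficulty to be precisely this boundary calibration: the constant $10$ in the hypothesis $n\geq 10r$ is tuned so that Lemma~\ref{lem1} barely suffices, so it is important to use the sharp bound $|B|\geq 2r$ from Claim~\ref{cla4} rather than the weaker $|B|\geq r_0-1$ demanded merely by the lemma, and to exploit that $r$ is odd, since for even $r$ an $A$-path of odd length $3r-1$ cannot have both endpoints in $A$ at all.
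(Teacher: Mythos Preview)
Your proposal is correct and follows essentially the same route as the paper: assume no $A$-path of length $3r-1$, apply Lemma~\ref{lem1} with parameter $r_0=(3r-1)/2$ to bound $e(A,B)$, add $2e(A)\leq r|A|$ from Claim~\ref{cla4}, and then use $|A|+|B|+1=n$ together with $|B|\geq 2r$ to force $\gamma(u^\star)<\lfloor n^2/4\rfloor$, contradicting~(\ref{eq2}). The paper carries out the last step by rewriting the bound as $\tfrac{5r-1}{2}n-r|B|-\tfrac{9r^2-2r+1}{4}$ and substituting $|B|\geq 2r$, whereas you plug in the extremal pair $(|A|,|B|)=(n-1-2r,2r)$ directly; these are the same maximization, and your claimed $\Theta(r^2)$ gap at $n=10r$ is exactly the paper's margin $\tfrac{10rn-17r^2}{4}<\lfloor n^2/4\rfloor$.
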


\begin{proof}
By Lemma \ref{lem3}, we know that $|A|\geq \lceil\frac{n}{2}\rceil$.
By the way of contradiction, suppose that there exists no $A$-path of length $3r-1$.
Then by Lemma \ref{lem1},
$$e(A,B)\leq \frac{3r-3}{2}|A|+\frac{3r-1}{2}|B|-\frac{(3r-1)(3r-3)}{4}.$$
Combining with $e(A)\leq \frac{r}{2}|A|$, we have
\begin{eqnarray*}
\gamma(u^\star)&=& |A|+2e(A)+e(A,B)\leq \frac{5r-1}{2}|A|+\frac{3r-1}{2}|B|-\frac{(3r-1)(3r-3)}{4}\\
&=&\frac{5r-1}{2}(|A|+|B|+1)-r|B|-\frac{9r^2-2r+1}{4}\\
&\leq&\frac{(5r-1)n}{2}-2r^2-\frac{9r^2-2r+1}{4}~~\mbox{since $|B|\geq 2r$}\\
&\leq&\frac{10rn-17r^2}{4}<\lfloor\frac{n^2}{4}\rfloor~~\mbox{as $n\geq 10r$},
\end{eqnarray*}
a contradiction. So the claim holds.
\end{proof}

The set of internal vertices of a path $P$ is denoted by $V_{in}(P)$.
For two vertices $u,v\in V(P)$, the distance between $u$ and $v$ in $P$ is denoted by $d_P(u,v)$.
A vertex of a $\theta_{r+1}$ is said to be a \emph{head}, if it is of degree two and it belongs to a triangle of $\theta_{r+1}$.
The following two claims give structural properties of $G$.

\begin{claim}\label{cla6} Let $P$ be an $A$-path in $G$ and $u\in A\setminus V(P)$.
The following statements hold.

\vspace{1mm}
\noindent(i) If $l(P)\geq r+3$ and $u\sim u'$ for some $u'\in A\setminus V_{in}(P)$, then $\overline{e(\{u,u'\},B)}\geq 3$.

\vspace{1mm}
\noindent(ii) If $l(P)\geq 2r+2$ and $u\sim u'\sim u''$ for some $u',u''\in A\setminus V(P)$, then $\overline{e(\{u\},B)}\geq r+1$.
\end{claim}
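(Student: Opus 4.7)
The plan is to prove both (i) and (ii) by contradiction, in each case assembling a forbidden $\theta_{r+1}=\theta(1,2,r+1)$ from three internally disjoint paths between two ``head'' vertices: the length-$1$ edge, a length-$2$ path, and a length-$(r+1)$ path built from a segment of $P$ together with $u^\star$ (and, in part (ii), with the extra vertices $u,u',u''$). Because $r$ is odd, the parity of the $P$-segment is the dominant constraint: a $P$-subpath between two $A$-vertices has even length while one ending at $u^\star$ has odd length, so the length-$(r+1)$ path (which has even length) must mix one $u^\star$-hop with a carefully chosen $P$-subpath so that the total length equals exactly $r+1$.

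For part (i), assume $\overline{e(\{u,u'\},B)}\le 2$. Then $u$ and $u'$ share at least $|B|-2$ common neighbours in $B$, any of which can serve as the middle of a length-$2$ path $u-b-u'$, while the length-$1$ path is the given edge $uu'$. For the length-$(r+1)$ path I would split on the position of $u'$. If $u'=a_k$, take $u-u^\star-a_j-b_{j+1}-\cdots-b_k-a_k$ with $j=k-(r-1)/2$, so the length is $2(k-j)+2=r+1$. If $u'\notin V(P)$, take instead $u-u^\star-a_j-b_{j+1}-\cdots-b_{j+(r-1)/2}-u'$ for some $j$ with $u'\sim b_{j+(r-1)/2}$; among the three candidate values of $j+(r-1)/2$ obtained from $j\in\{0,1,2\}$, at least one is a neighbour of $u'$ since $u'$ has at most two non-neighbours in $B$. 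Finally one picks $b\in B$ outside the $(r-1)/2$ vertices of the long path, which is possible because $|B|\ge 2r$ (Claim \ref{cla4}) rules out at most $2+(r-1)/2$ vertices of $B$. Internal disjointness is automatic since $u,u^\star\notin V(P)$, $u'\in A\setminus V_{in}(P)$, and $b\in B$ while the long path's internal vertices lie in $V(P)\cup\{u^\star\}$.

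For part (ii), assume $\overline{e(\{u\},B)}\le r$. Because $l(P)\ge 2r+2$, i.e.\ $k\ge r+1$, and $u$ has at most $r$ non-neighbours in $B$, there is at least one index $j_1\in\{1,\ldots,k\}$ with $u\sim b_{j_1}$. I would construct $\theta_{r+1}$ with heads $u'$ and $u^\star$: the length-$1$ edge is $u'u^\star$ (present because $u'\in A$); the length-$2$ path is $u'-u''-u^\star$, and this is precisely where the full hypothesis ``$u'\sim u''\in A\setminus V(P)$'' is used, both to supply the middle vertex $u''$ and to keep it off $V(P)$; the length-$(r+1)$ path is
\[
u'-u-b_{j_1}-(P\text{-subpath})-a_{j_2}-u^\star,
\]
where the $P$-subpath runs either forward ($j_2=j_1+(r-3)/2$) or backward ($j_2=j_1-(r-1)/2$), whichever direction keeps $j_2$ inside $\{0,1,\ldots,k\}$. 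Since $k\ge r+1$, at least one direction is admissible for every $j_1\in\{1,\ldots,k\}$, so the construction succeeds for any neighbour of $u$ in $V(P)\cap B$. Internal disjointness holds because $u,u''\notin V(P)$, $u\neq u''$, and $u^\star\notin V(P)$, so $u''$ never coincides with an internal vertex of the long path.

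The main technical obstacle in both parts is the combined parity-and-index bookkeeping: since $r$ is odd, no pure $P$-detour achieves the even length $r+1$, so the construction is forced to insert exactly one $u^\star$-edge (part (i)) or one $u^\star$-edge together with the extra $A$--$A$ edge $uu'$ (part (ii)) and then tune the $P$-subpath by one step. A secondary but delicate point is the counting needed to guarantee that the common neighbour $b$ in (i) and the starting index $j_1$ in (ii) can be chosen disjoint from the long path, which is where the lower bounds $|B|\ge 2r$ from Claim \ref{cla4} and $k\ge r+1$ from the hypothesis of (ii) are essential.
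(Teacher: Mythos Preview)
Your proposal is correct. For part (ii) your construction is essentially the paper's: both build the length-$(r+1)$ path as $u'-u-v_i-(P\text{-segment})-u_j-u^\star$ with the $P$-segment tuned to length $r-3$, then close up with the edge $u'u^\star$ and the length-$2$ path $u'-u''-u^\star$.

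For part (i) you take a genuinely different route. The paper never uses a common $B$-neighbour of $u$ and $u'$; instead it fixes two disjoint pairs $\{v_1,v_{1+(r-1)/2}\}$ and $\{v_2,v_{2+(r-1)/2}\}$ of $B$-vertices on $P$ at $P$-distance $r-1$, and shows that $e(\{u,u'\},\{v_i,v_{i+(r-1)/2}\})\le 2$ for each $i$ (otherwise a matching of size two gives an $A$-path of length $r+1$ from $u$ to $u'$, which together with the triangle $uu^\star u'$ yields $\theta_{r+1}$ with head $u^\star$). This immediately gives four missing edges when $r\ge 5$, but forces a separate ad hoc argument when $r=3$ because the two pairs then overlap in $v_2$. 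Your construction instead routes the long path through $u^\star$ and uses a common $B$-neighbour $b$ for the length-$2$ path, with the pigeonhole over $j\in\{0,1,2\}$ replacing the two-pairs trick. What you gain is uniformity: your argument needs no special treatment of $r=3$. What the paper's argument buys is that it never appeals to Claim~\ref{cla4} (the bound $|B|\ge 2r$), so it is slightly more self-contained; it also actually yields $\overline{e(\{u,u'\},B)}\ge 4$ for $r\ge 5$, a bit more than is needed.
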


\begin{proof} (i) Let $P=u_1v_1u_2v_2\cdots u_{\frac{r+3}{2}}v_{\frac{r+3}{2}}u_{\frac{r+5}{2}}$ be an $A$-path of length $r+3$,
where $u_i \in A$ and $v_i\in B.$
Pick two pairs of vertices $\{v_1,v_{1+\frac{r-1}{2}}\}$ and $\{v_2,v_{2+\frac{r-1}{2}}\}$.
Clearly, $d_P(v_i,v_{i+\frac{r-1}{2}})=r-1$ for $i\in\{1,2\}.$
If $e(\{u,u'\},\{v_i,v_{i+\frac{r-1}{2}}\})\geq 3$, then there are two independent edges, say $uv_i,u'v_{i+\frac{r-1}{2}}$, in $E(\{u,u'\},\{v_i,v_{i+\frac{r-1}{2}}\})$.
Therefore, $uv_iu_{i+1}v_{i+1}\cdots u_{i+\frac{r-1}{2}}v_{i+\frac{r-1}{2}}u'$ is an $A$-path of length $r+1$.
Combining with the triangle $u u^\star u'$,
we get a $\theta_{r+1}$ with its head $u^\star$, a contradiction.
Thus $e(\{u,u'\},\{v_i,v_{i+\frac{r-1}{2}}\})\leq 2$ and then $\overline{e(\{u,u'\},\{v_i,v_{i+\frac{r-1}{2}}\})}\geq 2$
for $i\in\{1,2\}.$

If $r\geq 5$, then $\{v_1,v_{1+\frac{r-1}{2}}\}\cap\{v_2,v_{2+\frac{r-1}{2}}\}=\varnothing$,
and thus $\overline{e(\{u,u'\},B)}\geq 4,$ as required.
Now consider $r=3$. It suffices to show $\overline{e(\{u,u'\},\{v_1,v_2,v_3\})}\geq 3$.
Note that $\overline{e(\{u,u'\},\{v_1,v_2\})}\geq 2$ and $\overline{e(\{u,u'\},\{v_2,v_3\})}\geq 2$.
If $\overline{e(\{u,u'\},\{v_1,v_2,v_3\})}\leq 2$,
then $\overline{e(\{u,u'\},\{v_2\})}=2$ and $\overline{e(\{u,u'\},\{v_1,v_3\})}=0$.
This implies that $e(\{u,u'\},\{v_1,v_3\})=4$.
Notice that $u'\notin V_{in}(P)$. Then, either $u'\notin \{u_1,u_2,u_3\}$ or $u'\notin \{u_2,u_3,u_4\}$.
Without loss of generality, assume that $u'\notin \{u_1,u_2,u_3\}$, then $u'u^\star u_1v_1u$ is a path of length 4.
Together with the triangle $u'v_3u$, we get a $\theta_4$ with its head $v_3$, a contradiction.
Therefore, $\overline{e(\{u,u'\},B)}\geq 3$.

\vspace{1mm}
(ii) Let $P=u_1v_1u_2v_2\cdots u_{r+1}v_{r+1}u_{r+2}$ be an $A$-path of length $2r+2$, where $u_i \in A$ and $v_i\in B.$
If $u\sim v_i$ for some $i\in \{1,2,\ldots,r+1\}$, then select a vertex $v_j\in V(P)$ such that $d_P(v_i,v_j)=r-3$.
Let $P_{v_i\rightarrow v_j}$ be a subpath of $P$ with endpoints $v_i$ and $v_j$.
Then $u'uP_{v_i\rightarrow v_j}u_{j+1}u^\star$ is a path of length $r+1$.
Combining with the triangle $u'u''u^\star$, we get a $\theta_{r+1}$ with its head $u''$, a contradiction.
It follows that $N_G(u)\cap \{v_1,\ldots,v_{r+1}\}=\emptyset$ and then the result holds.
\end{proof}

\begin{claim}\label{cla7} Let $P$ be an $A$-path in $G$ starting from some vertex $u\in A$. If
 $l(P)\geq r-1$ and $u\sim u'\sim u''$ for some $u',u''\in A$, then either $u'\in V(P)$ or $u''\in V(P)$.
\end{claim}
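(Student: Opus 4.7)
The plan is to argue by contradiction: assume both $u' \notin V(P)$ and $u'' \notin V(P)$, and construct a copy of $\theta_{r+1} = \theta(1,2,r+1)$ in $G$ with heads $u^\star$ and $u'$. The assumption forces $u'' \neq u$ (since $u \in V(P)$), and $u, u', u''$ are pairwise distinct via the edges between them, and all distinct from $u^\star$ since they lie in $A = N_G(u^\star)$.

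The next step is a parity observation. Because $r$ is odd (this is Case~1), $r-1$ is even, so the initial subpath $P'$ of $P$ of length exactly $r-1$ --- which alternates $A$--$B$--$A$--$B$--$\cdots$ starting from $u \in A$ --- terminates at a vertex $a \in A$, and in particular $a \sim u^\star$.

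With $P'$ in hand, I would exhibit three internally disjoint $u^\star$--$u'$ paths. The first is the edge $u^\star u'$, of length $1$. The second is $u^\star u'' u'$, of length $2$, which is a valid path because $u'' \sim u^\star$ (as $u'' \in A$) and $u'' \sim u'$ by hypothesis. The third is obtained by concatenating the edge $u^\star a$, the reverse of $P'$ from $a$ back to $u$, and the edge $uu'$, for a total length of $1 + (r-1) + 1 = r+1$. Internal disjointness reduces to checking that $u''$ is different from every internal vertex of the long path; but those internal vertices all lie in $V(P)$, while $u'' \notin V(P)$. Together these three paths form a $\theta_{r+1}$ with heads $u^\star$ and $u'$, contradicting the $\theta_{r+1}$-freeness of $G$, and the claim follows.

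There is no serious obstacle, only one parity subtlety worth flagging: the construction crucially needs $r-1$ to be even, so that the length-$(r-1)$ prefix of the alternating $A$-path $P$ ends back in $A$, where $u^\star$ has all its neighbors. Were $r$ even instead, that prefix would terminate in $B$ and the closing edge to $u^\star$ would not exist; this is exactly why the odd case is addressed separately in Case~1.
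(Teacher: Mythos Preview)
Your argument is correct and essentially identical to the paper's: both take the length-$(r-1)$ prefix of $P$ (which ends in $A$ because $r$ is odd), extend it by the edges $u'u$ and $u'''u^\star$ to obtain a $u'$--$u^\star$ path of length $r+1$, and close it with the triangle $u'u''u^\star$ to produce the forbidden $\theta_{r+1}$. One terminological slip: in the paper's language the \emph{head} of $\theta_{r+1}$ is the unique degree-$2$ vertex on the triangle, namely $u''$ here; the vertices $u^\star$ and $u'$ are the two degree-$3$ branch vertices, not heads.
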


\begin{proof} Let $P_{u\rightarrow u'''}$ be an $A$-path of length $r-1$ with endpoints $u$ and $u'''$.
If $u',u''\notin V(P)$, then $u'P_{u\rightarrow u'''}u^\star$ is a path of length $r+1$. Combining with the triangle $u'u''u^\star$,
we get a $\theta_{r+1}$ with its head $u''$, a contradiction.
So the claim holds.\end{proof}

A component $H$ of $G[A]$ is said to be \textbf{\emph{good}}, if $2e(H)+e(V(H),B)<|H||B|$.
Clearly, $H$ is not good if $H$ is an isolated vertex in $G[A]$.
The following claim gives a characterization of good components in $G[A]$.

\begin{claim}\label{cla8} Let $H$ be a component of $G[A]$ and $P$ be an $A$-path of length $3r-1$ in $G$.
The following statements hold.

\vspace{1mm}
\noindent (i) If $|H|=2$ and $V(H)\nsubseteq V(P)$, then $H$ is good.

\vspace{1mm}
\noindent (ii) If $|H|\geq 3$ and $V^\star(H)\nsubseteq V(P)$, where $V^\star(H)=\{v\in V(H):d_H(v)\geq 2\}$, then $H$ is good.
\end{claim}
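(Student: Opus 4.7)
The plan is to prove both parts by contradiction: assume that $H$ is not good, which means $\overline{e(V(H), B)} \leq 2 e(H)$, and then produce a copy of $\theta_{r+1}$ in $G$, contradicting the hypothesis. The structural tools are Claims \ref{cla4}, \ref{cla6}, and \ref{cla7}, which bound $e(H)$ and control non-edges in $E(A, B)$ relative to the $A$-path $P$.

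For part (i), $H$ is a single edge $u_1 u_2$, so the target is $\overline{e(\{u_1, u_2\}, B)} \geq 3$. Since $V(H) \nsubseteq V(P)$, I take $u_1 \in V(H) \setminus V(P)$ and try to apply Claim \ref{cla6}(i) with $u = u_1$ and $u' = u_2$; the length condition $l(P) = 3r-1 \geq r+3$ holds for $r \geq 3$. The direct application succeeds whenever $u_2 \in A \setminus V_{in}(P)$, i.e.\ either $u_2 \notin V(P)$ or $u_2$ is an endpoint of $P$. When $u_2$ is an internal $A$-vertex of $P$, I split $P$ at $u_2$ into two sub-$A$-paths; for $r$ large enough the longer sub-path still has length at least $r+3$, and Claim \ref{cla6}(i) applies with $u_2$ as an endpoint of this sub-path. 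For the remaining small-$r$ edge case (when $u_2$ sits near the midpoint of $P$), the failure assumption forces $u_1$ to be adjacent to almost every $B$-vertex of $P$, which yields a length-$(r+1)$ sub-$A$-path from $u_1$ to $u_2$ along $P$; combined with the triangle $u_1 u^\star u_2$, this directly produces a forbidden $\theta_{r+1}$.

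For part (ii), I pick $u \in V^\star(H) \setminus V(P)$; since $d_H(u) \geq 2$, $u$ has at least two distinct $H$-neighbors $u_1', u_2'$, giving a length-$2$ path $u_1' \sim u \sim u_2'$ in $G[A]$. When both $u_i'$ also lie in $A \setminus V(P)$, Claim \ref{cla6}(ii) applied to $u_1'$ yields $\overline{d_B}(u_1') \geq r+1$, which together with $e(H) \leq \frac{r}{2} |H|$ from Claim \ref{cla4} already forces $\overline{e(V(H), B)} > 2 e(H)$. When some $u_i'$ lies in $V(P)$, I use Claim \ref{cla6}(i) on each edge $u u_i'$ to obtain $\overline{e(\{u, u_i'\}, B)} \geq 3$ and then aggregate these bounds over the edges at $u$ and the vertices of $V(H) \setminus V(P)$ to beat $2 e(H)$. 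The main obstacle is the delicate interaction between the preconditions of Claim \ref{cla6} and the allowed positions of $u_2$ or of the neighbors of $u$ on $P$, particularly the small-$r$ midpoint case in part (i) and the near-star configurations in part (ii), both of which require an ad hoc construction of $\theta_{r+1}$ to complete.
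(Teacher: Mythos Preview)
Your handling of part~(i) is essentially the paper's argument: take a sub-$A$-path of $P$ of length $r+3$ with $u_2$ not in its interior and invoke Claim~\ref{cla6}(i); the only delicate spot is $r=3$ with $u_2$ at the midpoint, which you flag correctly.

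Part~(ii), however, has a real gap. A single application of Claim~\ref{cla6}(ii) gives only $\overline{d_B}(u_1')\geq r+1$, hence $\overline{e(V(H),B)}\geq r+1$. This does \emph{not} force $\overline{e(V(H),B)}>2e(H)$, because $2e(H)$ can be as large as $r|H|$ with $|H|\geq 3$, and $r+1<r|H|$. Likewise, in your second case, harvesting $\overline{e(\{u,u_i'\},B)}\geq 3$ on one or two edges (even ignoring the $V_{in}(P)$ restriction in Claim~\ref{cla6}(i)) cannot beat $2e(H)$ once $H$ has more than a couple of edges. Your aggregation step is simply not strong enough: you are extracting information at one vertex of $H$ while the target inequality scales with $|H|$.

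The missing idea is that the hypothesis $V^\star(H)\nsubseteq V(P)$ actually forces the much stronger conclusion $V(H)\cap V(P)=\varnothing$. This is where Claim~\ref{cla7} is used (you never invoke it): if some vertex of $H$ lay on $P$ and some vertex of $V^\star(H)$ did not, one could find an edge $uu'$ with $u\in V(P)$, $u'\in V^\star(H)\setminus V(P)$, pick $u''\in N_H(u')\setminus\{u\}$, and apply Claim~\ref{cla7} twice (once from $u$, once from $u$ or $u''$ along a sub-$A$-path of length $\geq r-1$ avoiding the other) to drag $u'$ back onto $P$, a contradiction. Once $V(H)\cap V(P)=\varnothing$, Claim~\ref{cla6}(ii) applies to \emph{every} vertex of $H$ that starts a $P_3$ in $H$ (all pendant vertices if $H$ is a star $K_{1,s}$, all vertices otherwise), yielding $\overline{e(V(H),B)}\geq (r+1)s$ or $(r+1)|H|$; this is what finally dominates $2e(H)\leq 2s$ or $r|H|$.
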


\begin{proof}
(i) Let $V(H)=\{u,u'\}$. Since $V(H)\nsubseteq V(P)$,
without loss of generality, we may assume that $u\notin V(P)$.
If $r\geq 5$, then $l(P)=3r-1\geq 2r+4$.
Therefore, regardless of whether $u'\in V(P)$ or not,
there exists an $A$-subpath $P_1$ of $P$ with $l(P_1)=r+3$ and $u'\notin V_{in}(P_1)$.
By Claim \ref{cla6} (i), we have $\overline{e(\{u,u'\},B)}\geq 3$. It follows that
$$2e(H)+e(V(H),B)\leq 2+|H||B|-3<|H||B|,$$
and then $H$ is a good component.

If $r=3$, then $l(P)=8$. Let $P=u_1v_1\cdots u_4v_4u_5,$ where $u_i\in A$ and $v_i\in B$.
If $u'\neq u_3$ (where $u_3$ is the central vertex of $P$),
then there exists an $A$-subpath $P_2$ of $P$ with $l(P_2)=r+3$ and $u'\notin V_{in}(P_2)$.
Similar as above, $H$ is good. If $u'=u_3$, then $\overline{e(\{u,u_3\},\{v_1,v_2\})}\geq 2$
(otherwise, there are two independent edges, say $uv_1, u_3v_2$, in $E(\{u,u_3\},\{v_1,v_2\})$.
Thus $uv_1u_2v_2u_3$ is an $A$-path of length 4 and then there exists a $\theta_4$ with its head $u^\star$).
Similarly, $\overline{e(\{u,u_3\},\{v_3,v_4\})}\geq 2$.
It follows that
$$2e(H)+e(V(H),B)\leq 2+|H||B|-4<|H||B|,$$
as desired.

\vspace{1mm}
(ii) We first show $V(H)\cap V(P)=\varnothing.$
Suppose to the contrary,
then there exists a path $P^\star\subseteq H$ with one endpoint in $V(P)$
and the other in $V^\star(H)\setminus V(P)$, since $V^\star(H)\nsubseteq V(P)$.
Furthermore, we can find an edge $uu'\in E(P^\star)$ with $u\in V(P)$ and $u'\in V^\star(H)\setminus V(P)$.
By the definition of $V^\star(H)$, we have $d_H(u')\geq2$. Let $u''\in N_H(u')\setminus \{u\}$.
Note that $l(P)\geq 3r-1$. Then there exists an $A$-subpath $P_3$ of $P$, starting from $u$, such that $l(P_3)\geq r-1$. Since $u'\notin V(P)$ and then $u'\notin V(P_3)$,
by Claim \ref{cla7}, we have $u''\in V(P_3)\subseteq V(P).$
Note that $l(P)\geq3r-1$ and $u,u''\in V(P)$.
Then there exists either an $A$-subpath $P_4$ of $P$ starting from $u$ with $l(P_4)\geq r-1$ and $u''\notin V(P_4)$,
or an $A$-subpath $P_5$ of $P$ starting from $u''$ with $l(P_5)\geq r-1$ and $u\notin V(P_5)$.
By Claim \ref{cla7}, in both cases we have $u'\in V(P)$, a contradiction.
Therefore, $V(H)\cap V(P)=\varnothing$.

Note that $l(P)=3r-1\geq 2r+2$.
If $H\cong K_{1,s}$ ($s\geq 2$), then for each pendent vertex $u\in V(H)$ there exist $u',u''\in V(H)$ such that $u\sim u'\sim u''$.
By Claim \ref{cla6} (ii), we have $\overline{e(\{u\},B)}\geq r+1$.
Hence, $$2e(H)+e(V(H),B)\leq 2s+|H||B|-(r+1)s<|H||B|,$$ as required.
If $H$ is not a star, then for each vertex $u\in V(H)$ there exist $u',u''\in V(H)$ such that $u\sim u'\sim u''$.
Again by Claim \ref{cla6} (ii), we have $\overline{e(\{u\},B)}\geq r+1$.
Combining with Claim \ref{cla4}, we have $$2e(H)+e(V(H),B)\leq r|H|+|H||B|-(r+1)|H|<|H||B|,$$ as required.
\end{proof}

For a subset $A'\subseteq A$, an $A'$-path of $G$ is a path consisting of edges in $E(A',B)$ and with both endpoints in $A'$.
Claim \ref{cla5} states that there exists an $A$-path of length $3r-1$ in $G$.
The following claim gives a stronger characterization.

\begin{claim}\label{cla9} Let $P$ be an $A$-path of length $3r-1$ in $G$ and $A'=A\setminus V(P)$.
Then there exists an $A'$-path $P'$ of length $3r-1$.
\end{claim}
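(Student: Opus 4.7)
The plan is to argue by contradiction, following the strategy used in Claim~\ref{cla5}. Suppose no $A'$-path of length $3r-1$ exists in $G$; I will derive $\gamma(u^\star)<\lfloor n^2/4\rfloor$, contradicting inequality~(\ref{eq2}).

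I would first apply Lemma~\ref{lem1} to the bipartite graph on $(A',B)$. Since $r$ is odd, $3r-1$ is even and an $A'$-path of length $3r-1$ is precisely a $P_{3r}$ with both endpoints in $A'$; so the lemma's parameter specialises to $r'=\tfrac{3r-1}{2}$. The size hypotheses are easily checked: the Perron-vector estimate in the proof of Lemma~\ref{lem3} gives $|A|\geq\lceil n/2\rceil\geq 5r$ (since $n\geq 10r$), whence $|A'|=|A|-\tfrac{3r+1}{2}\geq\tfrac{7r-1}{2}\geq r'$, and Claim~\ref{cla4} gives $|B|\geq 2r\geq r'-1$. Lemma~\ref{lem1} then yields
$$e(A',B)\leq\tfrac{3r-3}{2}|A'|+\tfrac{3r-1}{2}|B|-\tfrac{(3r-1)(3r-3)}{4}.$$

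I would then split $e(A,B)=e(A',B)+e(V(P)\cap A,B)$, bound the second summand trivially by $\tfrac{3r+1}{2}|B|$ using $|V(P)\cap A|=\tfrac{3r+1}{2}$, combine with $e(A)\leq\tfrac{r}{2}|A|$ from Claim~\ref{cla4}, and---crucially---substitute the \emph{exact} identity $|A'|=|A|-\tfrac{3r+1}{2}$. A brief calculation telescopes the constants to give
$$\gamma(u^\star)=|A|+2e(A)+e(A,B)\leq\tfrac{5r-1}{2}|A|+3r|B|-\tfrac{3r(3r-3)}{2}.$$
Substituting $|B|=n-1-|A|$ and using $|A|\geq\lceil n/2\rceil$ (note the coefficient of $|A|$ becomes negative after this substitution), the desired inequality $\gamma(u^\star)<\lfloor n^2/4\rfloor$ reduces to $n^2-(11r-1)n+18r^2-6r>0$. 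A direct check shows the left-hand side equals $8r^2+4r>0$ at $n=10r$, and its derivative $2n-(11r-1)\geq 9r+1$ is positive there, so the expression stays positive throughout $n\geq 10r$, delivering the contradiction.

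The one delicate point I expect to require care is the bookkeeping in the second step: one must plug the exact value $|A'|=|A|-\tfrac{3r+1}{2}$ into Lemma~\ref{lem1}'s bound rather than the cruder $|A'|\leq|A|$. The resulting extra negative constant $-\tfrac{(3r-3)(3r+1)}{4}$ is precisely what makes the final quadratic inequality hold at $n=10r$; without it the argument would force $n\gtrsim 14r$, which is stronger than the theorem's hypothesis.
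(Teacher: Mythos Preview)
Your proposal is correct and follows essentially the same route as the paper: argue by contradiction, apply Lemma~\ref{lem1} to the bipartite pair $(A',B)$, bound $e(A\setminus A',B)$ trivially by $\tfrac{3r+1}{2}|B|$, combine with $2e(A)\le r|A|$, and substitute $|A'|=|A|-\tfrac{3r+1}{2}$ to reach $\gamma(u^\star)\le\tfrac{5r-1}{2}|A|+3r|B|-\tfrac{3r(3r-3)}{2}$. The only cosmetic difference is the endgame: the paper rewrites this as $3r(|A|+|B|+1)-\tfrac{r+1}{2}|A|-\tfrac{3r(3r-1)}{2}$ and then uses the crude estimates $|A|\ge n/2$ and $3r-1\ge 2r$ to land on $\tfrac{11}{4}rn-3r^2<\lfloor n^2/4\rfloor$, whereas you keep the sharper constants and verify the resulting quadratic directly at $n=10r$; both finish the job.
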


\begin{proof} Note that $|B|\geq 2r$, $|A|\geq \lceil\frac{n}{2}\rceil\geq 5r$ and
$|A\setminus A'|=|V(P)\cap A|=\frac{3r+1}{2}$.
Then $|B|>\frac{3r-3}{2}$ and $|A'|=|A|-\frac{3r+1}{2}>\frac{3r-1}{2}.$
If $G$ does not contain an $A'$-path of length $3r-1$, then by Lemma \ref{lem1},
$$e(A',B)\leq \frac{3r-3}{2}|A'|+\frac{3r-1}{2}|B|-\frac{(3r-1)(3r-3)}{4}.$$
Moreover, $$e(A\setminus A',B)\leq|A\setminus A'||B|=\frac{3r+1}{2}|B|.$$
By Claim \ref{cla4}, we have $2e(A)\leq r|A|$.
Combining these inequalities,
\begin{eqnarray*}
\gamma(u^\star)&=&|A|+2e(A)+e(A,B)\\
&\leq&(1+r)|A|+\frac{3r-3}{2}|A'|+3r|B|-\frac{(3r-1)(3r-3)}{4}\\
&=&(1+r)|A|+\frac{3r-3}{2}\left(|A|-\frac{3r+1}{2}\right)+3r|B|-\frac{(3r-1)(3r-3)}{4}\\
&=&\frac{5r-1}{2}|A|+3r|B|-\frac{3r(3r-3)}{2}\\
&=&3r(|A|+|B|+1)-\frac{r+1}{2}|A|-\frac{3r(3r-1)}{2}\\
&\leq&3rn-\frac{r+1}{4}n-3r^2~~\mbox{since $|A|\geq\frac{n}{2}$ and $3r-1\geq2r$}\\
&<&\frac{11}{4}rn-3r^2<\lfloor\frac{n^2}{4}\rfloor~~\mbox{as $n\geq 10r$},
\end{eqnarray*}
a contradiction. So the claim holds.
\end{proof}

In the rest, we only need to show that $e(A)=0$, and then by Lemma \ref{lem3}, we complete the proof.
By the way of contradiction, suppose that $e(A)\neq 0.$
Then $G[A]$ contains nontrivial components.
Let $A_0=\{u\in A: d_A(u)=0\}$ and $A_1=A\backslash A_0$.
If all nontrivial components of $G[A]$ are good components, then
$$2e(A_1)+e(A_1,B)<|A_1||B|.$$
It follows that
\begin{eqnarray*}
\gamma(u^\star)&=&|A|+2e(A_1)+e(A_1,B)+e(A_0,B)\\
&<&|A|+|A_1||B|+|A_0||B|\\
&=&|A|(|B|+1)\leq \lfloor\frac{n^2}{4}\rfloor,
\end{eqnarray*}
a contradiction.
Thus, $G[A]$ contains a nontrivial component $H$ which is not good.
By Claim \ref{cla9}, there exist two $A$-paths, $P$ and $P'$,
such that $l(P)=l(P')=3r-1$ and $V(P)\cap V(P')\cap A=\varnothing.$
Furthermore, by Claim \ref{cla8},
if $|H|=2$, then $V(H)\subseteq V(P)$ and $V(H)\subseteq V(P')$, and
if $|H|\geq3$, then $V^\star(H)\subseteq V(P)$ and $V^\star(H)\subseteq V(P')$.
Both cases contradict the fact $V(P)\cap V(P')\cap A=\varnothing$.
This completes the proof of Case 1.
\hfill{\rule{4pt}{8pt}}

\vspace{2.5mm}

\noindent{\bf Case 2. $r$ is even.}
\vspace{1.5mm}

Now, $n\geq7r$. We first give four claims.
\begin{claim}\label{cla10}
If $P$ is an $A$-path of length $r$ starting from $u_1$, then $N_A(u_1)\subseteq V(P)$.
\end{claim}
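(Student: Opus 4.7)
The plan is to argue by contradiction: assume some $u' \in N_A(u_1) \setminus V(P)$ exists, and use it to construct an explicit copy of $\theta_{r+1}$ in $G$, contradicting the $\theta_{r+1}$-freeness of $G$. Because $r$ is even and every $A$-path alternates between $A$ and $B$, $P$ can be written as $P = u_1 v_1 u_2 v_2 \cdots v_{r/2} u_{r/2+1}$ with the $u_i\in A$ and the $v_i\in B$. In particular the far endpoint $u_{r/2+1}$ lies in $A$, which is where the parity hypothesis is actually used.

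The theta I will build takes $u_1$ and $u^\star$ as its two degree-$3$ vertices, with the three internally disjoint $u_1$--$u^\star$ paths of lengths $1,2,r+1$ required by $\theta(1,2,r+1)=\theta_{r+1}$ constructed as follows. The length-$1$ path is the edge $u_1 u^\star$, which exists because $u_1\in A=N_G(u^\star)$. The length-$2$ path is $u_1\, u'\, u^\star$, available because $u_1 u'\in E(G)$ by assumption and $u' u^\star \in E(G)$ since $u'\in A$. The length-$(r+1)$ path is the extension of $P$ itself, namely $u_1 v_1 u_2 \cdots v_{r/2} u_{r/2+1} u^\star$; here the final edge $u_{r/2+1}u^\star$ exists precisely because $u_{r/2+1}\in A$, so the parity of $r$ is exactly what licenses this last step.

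The only thing that requires care, and what I would flag as the one real verification, is internal disjointness. The length-$1$ path has no internal vertex; the length-$2$ path has $u'$ as its only internal vertex; and the internal vertices of the length-$(r+1)$ path are exactly the elements of $V(P)\setminus\{u_1\}$. Since $u'\notin V(P)$ by the contradiction hypothesis, and $u^\star$ appears only as the common endpoint and not internal to any of the three paths, the three internal-vertex sets are pairwise disjoint. Thus the three paths together yield a copy of $\theta_{r+1}$ in $G$, contradicting $\theta_{r+1}$-freeness and forcing $N_A(u_1)\subseteq V(P)$. I do not expect any substantial obstacle beyond this bookkeeping; the argument is essentially a one-shot construction that leverages the even length of $P$ and the fact that $u^\star$ dominates $A$.
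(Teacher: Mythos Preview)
Your proof is correct and essentially identical to the paper's own argument: both write $P=u_1v_1\cdots u_{r/2+1}$, extend $P$ by the edge $u_{r/2+1}u^\star$ to a path of length $r+1$, and combine it with the triangle $u_1u'u^\star$ to obtain a $\theta_{r+1}$ with head $u'$. The only addition on your side is the explicit bookkeeping of internal disjointness, which the paper leaves implicit.
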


\begin{proof}
Let $P=u_1v_1\cdots u_{\frac{r}{2}}v_{\frac{r}{2}}u_{\frac{r}{2}+1}$, where $u_i\in A$ and $v_i\in B$.
Then $P^\star=P+u_{\frac{r}{2}+1}u^\star$ is a path of length $r+1$.
If there exists a vertex $u\in N_A(u_1)\setminus V(P)$, then $P^\star$
and the triangle $u_1uu^\star$
consist a $\theta_{r+1}$ with its head $u$, a contradiction.
\end{proof}

\begin{claim}\label{cla11}
$|B|\geq r$ and $G$ contains an $A$-path of length $2r$.
\end{claim}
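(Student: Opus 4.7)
The plan is to establish both parts of the claim by contradiction, in each case constructing an upper bound on $\gamma(u^\star)$ that clashes with the lower bound $\gamma(u^\star)\geq\lfloor n^2/4\rfloor$ coming from inequality (\ref{eq2}) in the proof of Lemma \ref{lem3}.

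The key preliminary observation, mirroring Claim \ref{cla4} in the odd case, is that $G[A]$ is $P_{r+2}$-free. Indeed, given any path $v_1v_2\cdots v_{r+2}$ in $G[A]$, the three paths $v_2u^\star$, $v_2v_1u^\star$, and $v_2v_3v_4\cdots v_{r+2}u^\star$ between $v_2$ and $u^\star$, of lengths $1$, $2$, and $r+1$ respectively, are internally disjoint (their interiors are $\emptyset$, $\{v_1\}$, and $\{v_3,\dots,v_{r+2}\}$) and exhibit a $\theta_{r+1}$ in $G$, contradicting $\theta_{r+1}$-freeness. Applying Lemma \ref{lem2} to $G[A]$ then yields $e(A)\leq\tfrac{r}{2}|A|$.

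For the first part I would assume $|B|\leq r-1$ (so $|A|=n-1-|B|\geq n-r$) and bound
\[
\gamma(u^\star)\leq (1+r+|B|)(n-1-|B|)
\]
using $e(A)\leq\tfrac{r}{2}|A|$ and $e(A,B)\leq |A||B|$. Because $n\geq 7r$, the right-hand side is increasing in $|B|$ on $[0,r-1]$, so its maximum there is $2r(n-r)$; a direct check shows $2r(n-r)<\lfloor n^2/4\rfloor$ whenever $n\geq 7r$ (the required inequality reduces to $(n-4r)^2>8r^2$). This contradicts inequality (\ref{eq2}) and forces $|B|\geq r$.

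For the second part I would assume no $A$-path of length $2r$ exists and apply Lemma \ref{lem1} to the bipartite graph between $A$ and $B$, which is valid since $|A|\geq \lceil n/2\rceil\geq r$ and $|B|\geq r$ by the first part. This yields $e(A,B)\leq (r-1)|A|+r|B|-r(r-1)$. Combining with $e(A)\leq\tfrac{r}{2}|A|$, the identity $|A|+|B|=n-1$, and the bound $|A|\leq n-1-r$, one obtains a linear-in-$n$ upper bound on $\gamma(u^\star)$ that is again strictly less than $\lfloor n^2/4\rfloor$ for $n\geq 7r$, contradicting inequality (\ref{eq2}).

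The main obstacle is that the hypothesis $n\geq 7r$ is essentially tight for this strategy: the numerical gap between the upper bound on $\gamma(u^\star)$ and $\lfloor n^2/4\rfloor$ shrinks to order $r$ at the threshold, so both the Erd\H{o}s--Gallai bound $e(A)\leq\tfrac{r}{2}|A|$ coming from $P_{r+2}$-freeness and the sharp Lemma \ref{lem1} bound must be used in their strongest form; any weaker estimate on either term would fail to close the gap.
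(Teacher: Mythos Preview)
Your proposal is correct and follows essentially the same route as the paper: both parts argue by contradiction, using the $P_{r+2}$-freeness of $G[A]$ together with Lemma~\ref{lem2} to bound $e(A)$, and for the second part invoking Lemma~\ref{lem1} to bound $e(A,B)$, arriving in each case at $\gamma(u^\star)<\lfloor n^2/4\rfloor$ in contradiction to (\ref{eq2}). The only cosmetic difference is that you phrase the final estimate via $|A|\leq n-1-r$ while the paper substitutes $|B|\geq r$ directly into $2r(|A|+|B|+1)-r(|B|+r+1)$; both yield the same bound $2rn-r(2r+1)$.
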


\begin{proof}
We first show that $|B|\geq r$. Suppose to the contrary that $|B|\leq r-1$.
Since $G$ is $\theta_{r+1}$-free, $G[A]$ is $P_{r+2}$-free.
Then by Lemma \ref{lem2}, $2e(A)\leq r|A|$. It follows that
\begin{eqnarray*}
\gamma(u^\star)&=&|A|+2e(A)+e(A,B)\leq (1+r)|A|+|A||B|\\
&=&(1+r+|B|)(n-1-|B|)\\
&\leq& 2r(n-r)~~\mbox{since $|B|\leq r-1$}\\
&<& \lfloor\frac{n^2}{4}\rfloor~~\mbox{as $n\geq 7r$ and $r\geq 2$},
\end{eqnarray*}
a contradiction. Then $|B|\geq r$, as required.

Now, suppose that $G$ does not contain an $A$-path of length $2r$.
Then by Lemma \ref{lem1}, $$e(A,B)\leq (r-1)|A|+r|B|-r(r-1).$$
Furthermore,
\begin{eqnarray*}
\gamma(u^\star)&=&|A|+2e(A)+e(A,B)\leq (1+r)|A|+e(A,B)\\
&\leq& 2r|A|+r|B|-r(r-1)\\
&=&2r(|A|+|B|+1)-r(|B|+r+1)\\
&\leq& 2rn-r(2r+1)~~\mbox{since $|B|\geq r$}\\
&<& \lfloor\frac{n^2}{4}\rfloor~~\mbox{as $n\geq 7r$},
\end{eqnarray*}
a contradiction. So the claim holds.
\end{proof}

\begin{claim}\label{cla12}
Let $P$ be an $A$-path of length $2r$ in $G$ and $A'=A\setminus V(P)$.
Then there exists an $A'$-path $P'$ of length $2r$.
\end{claim}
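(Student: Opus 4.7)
The plan is to mirror the proof of Claim \ref{cla9} from Case 1, now with the length-$2r$ $A$-path produced by Claim \ref{cla11} in place of the length-$(3r-1)$ one. I suppose, for contradiction, that $G$ contains no $A'$-path of length $2r$, and aim to derive $\gamma(u^\star)<\lfloor n^2/4\rfloor$, contradicting inequality (\ref{eq2}). Since $P$ alternates between $A$ and $B$ with both endpoints in $A$, we have $|V(P)\cap A|=r+1$, so $|A\setminus A'|=r+1$ and $|A'|=|A|-r-1$. From (\ref{eq1}) we have $|A|\geq \lceil n/2\rceil\geq \tfrac{7r}{2}$, and from Claim \ref{cla11} we have $|B|\geq r$; in particular the hypotheses $|A'|\geq r$ and $|B|\geq r-1$ of Lemma \ref{lem1} are satisfied.

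Applying Lemma \ref{lem1} to the bipartite graph with parts $A'$ and $B$, the contradiction assumption gives
\[
e(A',B)\leq (r-1)|A'|+r|B|-r(r-1).
\]
Combined with the trivial bound $e(A\setminus A',B)\leq (r+1)|B|$ and with $2e(A)\leq r|A|$ from Lemma \ref{lem2} (applied to the $P_{r+2}$-free graph $G[A]$, by the same reasoning as in Claim \ref{cla4}), substitution into $\gamma(u^\star)=|A|+2e(A)+e(A,B)$, together with $|A'|=|A|-r-1$ and $|A|+|B|+1=n$, simplifies to
\[
\gamma(u^\star)\leq 2r|A|+(2r+1)|B|-2r^2+r+1 = 2rn+|B|-r-2r^2+1.
\]
Using $|B|\leq \lfloor n/2\rfloor-1$ and $n\geq 7r$, the plan is to show that this quantity is strictly less than $\lfloor n^2/4\rfloor$, which yields the desired contradiction with (\ref{eq2}).

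The main obstacle is that, unlike Case 1 where $n\geq 10r$ leaves comfortable slack, the threshold $n\geq 7r$ is genuinely tight in Case 2: the direct calculation produces a bound of order $(2r+\tfrac12)n-r-2r^2$, which is strictly below $\lfloor n^2/4\rfloor$ only once $r$ is reasonably large. For the remaining small even values of $r$, I expect to tighten the estimate using Claim \ref{cla10}: every vertex of $V(P)\cap A$ is the starting point of an $A$-subpath of $P$ of length at least $r$, and therefore has at most $r/2$ neighbours in $A$. A careful accounting of the edges incident to $V(P)\cap A$ (both inside $A$ and from $V(P)\cap A$ into $B$) should provide the extra $O(r^2)$ of savings needed to close the gap for every even $r\geq 2$.
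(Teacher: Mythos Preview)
Your skeleton is the same as the paper's: assume there is no $A'$-path of length $2r$, apply Lemma~\ref{lem1} to bound $e(A',B)$, add the trivial $e(A\setminus A',B)\le (r+1)|B|$, plug in a bound on $2e(A)$, and show $\gamma(u^\star)<\lfloor n^2/4\rfloor$. You also correctly identify that the naive $2e(A)\le r|A|$ leaves no slack at $n=7r$ and that Claim~\ref{cla10} is the right tool to improve it.

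The gap is in the refinement you sketch. Saying each $u_i\in V(P)\cap A$ has at most $r/2$ neighbours in $A$ only yields $2e(A\setminus A')\le (r+1)\tfrac{r}{2}$, hence $2e(A)\le r|A'|+\tfrac{r(r+1)}{2}$ (using $e(A',A\setminus A')=0$, which does follow from Claim~\ref{cla10}). Running the arithmetic at $n=7r$ then gives $\gamma(u^\star)\le \tfrac{23}{2}r^2+2r$ versus $\lfloor n^2/4\rfloor=\tfrac{49}{4}r^2$, and this fails for $r=2$ (one gets $50$ versus $49$ at $n=14$). The paper squeezes out the missing saving by applying Claim~\ref{cla10} \emph{twice}, once to the forward and once to the backward length-$r$ subpath through $u_i$. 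This shows that $\{u_1,\dots,u_{r/2+1}\}$ and $\{u_{r/2+1},\dots,u_{r+1}\}$ are each independent, so $G[A\setminus A']$ is bipartite with $u_{r/2+1}$ isolated, and moreover $d_{A\setminus A'}(u_i)\le i-1$ for $i\le r/2$. That gives the sharper $e(A\setminus A')\le \sum_{i=1}^{r/2}(i-1)=\tfrac{r(r-2)}{8}$, hence $2e(A)<r|A'|+\tfrac{r^2}{4}$, and the final bound $\gamma(u^\star)\le (2r+\tfrac12)n-(\tfrac{11}{4}r^2+2r)$ is strictly below $\lfloor n^2/4\rfloor$ for all even $r\ge 2$ and $n\ge 7r$.

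So your plan is right, but the single application of Claim~\ref{cla10} you describe must be upgraded to the two-sided application above; without it the case $r=2$ at the threshold $n=14$ does not close.
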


\begin{proof} Let $P=u_1v_1\cdots u_rv_ru_{r+1}$, where $u_i\in A$ and $v_i\in B$.
Then $|A\setminus A'|=|V(P)\cap A|=r+1$ and thus
$|A'|=|A|-(r+1)>r$, since $|A|\geq\lceil\frac{n}{2}\rceil$ and $n\geq 7r$.
If $G$ does not contain an $A'$-path of length $2r$,
then by Lemma \ref{lem1},
\begin{eqnarray}\label{eq7}
e(A',B)\leq (r-1)|A'|+r|B|-r(r-1).
\end{eqnarray}

We first show that $e(A',A\setminus A')=0$.
Note that $A\setminus A'=\{u_1,\ldots, u_{r+1}\}$. Suppose that $uu_i\in E(A)$ for some $u\in A'$ and $u_i\in A\setminus A'$.
Without loss of generality, we may assume that $i\leq \frac{r}{2}+1.$
Then $u_iv_i\cdots u_{i+\frac{r}{2}-1}v_{i+\frac{r}{2}-1}u_{i+\frac{r}{2}}u^\star$ is a path of length $r+1$.
Together with the triangle $u_iuu^\star$, we get a $\theta_{r+1}$ with its head $u$, a contradiction.
Therefore, $e(A',A\setminus A')=0$.

Next, we show that $e(A\setminus A')\leq \frac{r^2}{8}.$
Let $$V_{11}=\{u_i:1\leq i\leq \frac{r}{2}+1\},~~~V_{12}=\{u_i:\frac{r}{2}+1\leq i\leq r+1\}.$$
If $u_iu_j\in E(G)$ for some $i,j$ with $1\leq i<j\leq \frac{r}{2}+1$,
then $P^\star=u_jv_j\cdots u_{j+\frac{r}{2}-1}v_{j+\frac{r}{2}-1}u_{j+\frac{r}{2}}$
is an $A$-path of length $r$ starting from $u_j$.
By Claim \ref{cla10}, $N_A(u_j)\subseteq V(P^\star)$ and then $u_i\in V(P^\star)$, a contradiction.
So, $e(V_{11})=0$.
Similarly, $e(V_{12})=0$. Hence, $G[A\setminus A']$ is a bipartite graph with an isolated vertex $u_{\frac{r}{2}+1}$.
Furthermore, for each $i\leq \frac{r}{2}$, $u_iv_i\cdots u_{i+\frac{r}{2}-1}v_{i+\frac{r}{2}-1}u_{i+\frac{r}{2}}$
is an $A$-path of length $r$ starting from $u_i$.
Again by Claim \ref{cla10},
if $u_iu_j\in E(G)$ for some $j\geq i+\frac{r}{2}+1$, we will get a contradiction.
Now one can see that $N_{A\setminus A'}(u_i)\subseteq \{u_j:\frac{r}{2}+2\leq j\leq \frac{r}{2}+i\}$ for each $i\leq \frac{r}{2}$ (where $N_{A\setminus A'}(u_1)=\varnothing$). Thus, $d_{A\setminus A'}(u_i)\leq i-1$ for each $i\leq \frac{r}{2}$.
It follows that
$$e(A\setminus A')=\sum_{i=1}^{\frac r2}d_{A\setminus A'}(u_i)\leq \sum_{i=1}^{\frac r2}(i-1)=\frac{r(r-2)}{8}<\frac{r^2}{8}.$$

Note that $G[A']$ is $P_{r+2}$-free. Then by Lemma \ref{lem2}, $2e(A')\leq r|A'|$. Thus
\begin{eqnarray}\label{eq8}
2e(A)=2e(A')+2e(A\setminus A')< r|A'|+\frac{r^2}{4}.
\end{eqnarray}
Note that \begin{eqnarray}\label{eq9}
e(A\setminus A',B)\leq |A\setminus A'||B|=(r+1)|B|.
\end{eqnarray}
Then combining (\ref{eq7}), (\ref{eq8}) with (\ref{eq9}), we have
\begin{eqnarray*}
\gamma(u^\star)&=&|A|+2e(A)+e(A',B)+e(A\setminus A',B)\\
&\leq&|A|+(2r-1)|A'|+(2r+1)|B|-\left(\frac{3}{4}r^2-r\right)\\
&=&|A|+(2r-1)(|A|-(r+1))+(2r+1)|B|-\left(\frac{3}{4}r^2-r\right)\\
&=&2r(|A|+|B|+1)+(|B|+1)-\left(\frac{11}{4}r^2+2r\right)\\
&\leq &\left(2r+\frac{1}{2}\right)n-\left(\frac{11}{4}r^2+2r\right)~~\mbox{since $|B|+1=n-|A|\leq \frac n2$}\\
&<& \lfloor\frac{n^2}{4}\rfloor~~\mbox{as $n\geq 7r$ and $r\geq 2$},
\end{eqnarray*}
a contradiction. So the claim holds.
\end{proof}

\begin{claim}\label{cla13}
Let $H$ be a nontrivial component of $G[A]$. Then $2e(H)+e(V(H),B)\leq |H||B|$,
and if equality holds then $H\cong K_{r+1}$.
\end{claim}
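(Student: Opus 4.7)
\textbf{Proof plan for Claim~\ref{cla13}.}
The desired inequality can be rewritten as $\sum_{w\in V(H)}(d_H(w)+d_B(w))\le |H||B|$, i.e., the average ``external degree'' $d_H(w)+d_B(w)$ of a vertex of $V(H)$ is at most $|B|$. My plan is to argue by contradiction: suppose $2e(H)+e(V(H),B)>|H||B|$ for some nontrivial component $H$ of $G[A]$, and then exhibit a $\theta_{r+1}$ in $G$.

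The core construction is to pick an edge $wu\in E(H)$ and form $\theta_{r+1}$ with poles $w$ and $u$: the edge $wu$ supplies the length-$1$ path, the common neighbor $u^\star$ of $w$ and $u$ gives the length-$2$ path $w$--$u^\star$--$u$, and one must then exhibit an internally disjoint $w$--$u$ path of length $r+1$ avoiding $u^\star$. The principal obstacle is a parity mismatch: since $r+1$ is odd while any walk between two $A$-vertices using only $E(A,B)$-edges has even length, such a path must either revisit $u^\star$ (forbidden) or borrow at least one extra edge from $E(A)$. The excess $2e(H)+e(V(H),B)-|H||B|>0$ is exactly what provides the required extra $E(A)$-edge and enough common $B$-neighbors along $V(H)$, while the long $A$-path of length $2r$ guaranteed by Claim~\ref{cla12} serves as a backbone; Claim~\ref{cla10} localizes where $N_A$ of an endpoint of such a path can lie, which one uses to ensure the internal disjointness of the construction.

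For the equality case $2e(H)+e(V(H),B)=|H||B|$, every step of the previous argument must be tight. This forces $2e(H)=r|H|$, so by the equality case of Lemma~\ref{lem2} applied to the connected $P_{r+2}$-free graph $H$ one obtains $H\cong K_{r+1}$; simultaneously it forces $e(V(H),B)=|H|(|B|-r)$, so each vertex of $V(H)$ misses exactly $r$ edges to $B$.

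The main technical difficulty lies in the explicit construction of the length-$(r+1)$ path with the correct parity: one must select an $E(A)$-edge essentially disjoint from the triangle $wuu^\star$, route the remaining segments through the long $A$-path from Claim~\ref{cla12}, and avoid collisions with $u^\star$ and with the chosen edge $wu$. For the equality part, an additional rigidity step is needed to exclude non-$K_{r+1}$ components (e.g.\ $K_2$ or stars $K_{1,s}$) from attaining equality, which likely requires sharpening the contradiction argument into a strict inequality whenever $H\not\cong K_{r+1}$.
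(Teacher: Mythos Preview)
Your plan has a genuine gap: you never explain how the global assumption $2e(H)+e(V(H),B)>|H||B|$ is converted into a concrete local configuration from which a $\theta_{r+1}$ can be built. Saying that ``the excess is exactly what provides the required extra $E(A)$-edge and enough common $B$-neighbors'' is not a mechanism; an excess in a sum of degrees does not by itself single out a particular edge $wu\in E(H)$ together with a $w$--$u$ path of length exactly $r+1$ internally disjoint from the triangle $wuu^\star$. Your parity discussion is correct but does not bridge this gap, and using the long $A$-path from Claim~\ref{cla12} as a ``backbone for routing'' leaves unaddressed why such routing avoids $V(H)$ and $u^\star$.

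The paper's proof proceeds quite differently and does not argue by contradiction. It first establishes that for \emph{any} nontrivial component $H$ there is an $A$-path $P$ of length $2r$ with $V(H)\cap V(P)=\varnothing$: by Claim~\ref{cla10} one has the dichotomy $V(H)\subseteq V(P)$ or $V(H)\cap V(P)=\varnothing$, and since Claims~\ref{cla11} and \ref{cla12} supply two such paths whose $A$-parts are disjoint, at least one of them is disjoint from $V(H)$. The path $P$ is then used not as a backbone but as an \emph{obstruction}: if some $u\in V(H)$ were adjacent to one of the $r$ vertices $v_i\in V(P)\cap B$, the edge $uv_i$ together with a length-$(r-1)$ subpath of $P$ would give an $A$-path of length $r$ starting at $u$, so Claim~\ref{cla10} forces $N_A(u)\subseteq V(P)$, contradicting $V(H)\cap V(P)=\varnothing$. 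Hence $d_B(u)\le |B|-r$ for every $u\in V(H)$, and combined with $2e(H)\le r|H|$ from Lemma~\ref{lem2} this yields $2e(H)+e(V(H),B)\le |H||B|$ directly, with equality forcing $2e(H)=r|H|$ and thus $H\cong K_{r+1}$. The missing idea in your plan is precisely this vertex-by-vertex degree bound coming from the $r$ forbidden $B$-neighbors on a disjoint $A$-path; once you have it, no explicit $\theta_{r+1}$ construction (and no parity argument) is needed.
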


\begin{proof}
By Claim \ref{cla11}, there exists an $A$-path $P$ with $l(P)=2r$.
We first show that either $V(H)\subseteq V(P)$ or $V(H)\cap V(P)=\varnothing$.
By the way of contradiction, suppose that $u\in V(H)\cap V(P)$ and $u'\in V(H)\setminus V(P)$ such that $u\sim u'$.
Since $l(P)=2r$, there exists an $A$-subpath $P^\star$ of $P$ of length $r$ starting from $u$. By Claim \ref{cla10},
we have $u'\in V(P^\star)\subseteq V(P)$, a contradiction. Hence $V(H)\subseteq V(P)$ or $V(H)\cap V(P)=\varnothing$.

Now by Claim \ref{cla12}, there exists another $A$-path $P'$ of length $2r$ with $V(P)\cap V(P')\cap A=\varnothing.$
Similarly, we have either $V(H)\subseteq V(P')$ or $V(H)\cap V(P')=\varnothing$.
It follows that either $V(H)\cap V(P)=\varnothing$ or $V(H)\cap V(P')=\varnothing$.
Without loss of generality, assume that $V(H)\cap V(P)=\varnothing$.

Let $P=u_1v_1\cdots u_rv_ru_{r+1}$, where $u_i\in A$ and $v_i\in B$.
We shall show that $N_G(u)\cap \{v_1,\ldots,v_r\}=\varnothing$ for each $u\in V(H)$.
Suppose to the contrary that $u\sim v_i$ for some $u\in V(H)$ and some $i\in\{1,\ldots,r\}$.
Combining the edge $uv_i$ with the subpath of $P$ of length $r-1$ starting from $v_i$,
we can get an $A$-path of length $r$ starting from $u$.
By Claim \ref{cla10}, we have $N_A(u)\subseteq V(P)$, which contradicts $V(H)\cap V(P)=\varnothing$.

Now, we have $d_B(u)\leq |B|-r$ for each $u\in V(H)$. Thus $e(V(H),B)\leq |H|(|B|-r).$
Since $H$ is $P_{r+2}$-free, by Lemma \ref{lem2} we have $2e(H)\leq r|H|$, with equality if and only if $H\cong K_{r+1}$.
It follows that $2e(H)+e(V(H),B)\leq |H||B|$, and if equality holds then $H\cong K_{r+1}$.
\end{proof}

By Lemma \ref{lem3}, in the remaining, we only need to show that $e(A)=0$.
Claim \ref{cla13} implies that $2e(H)+e(V(H),B)\leq |H||B|$ for any component $H$ of $G[A]$.
Furthermore, if $G[A]$ contains a good component, then we have $2e(A)+e(A,B)<|A||B|$.
It follows that $$\gamma(u^\star)=|A|+2e(A)+e(A,B)<|A|(|B|+1)\leq \lfloor\frac{n^2}{4}\rfloor,$$
a contradiction. So we may assume that all the components of $G[A]$ are not good,
that is, \begin{eqnarray}\label{eq10}
2e(H)+e(V(H),B)=|H||B|
\end{eqnarray} for any component $H$ of $G[A]$.
This implies that $$\gamma(u^\star)=|A|+2e(A)+e(A,B)=|A|+|A||B|=|A|(|B|+1)\leq \lfloor\frac{n^2}{4}\rfloor,$$
Combining with Lemma \ref{lem3}, we have
$\gamma(u^\star)=\lfloor\frac{n^2}{4}\rfloor$, $|A|=\lceil\frac n2\rceil$ and $$|B|=n-1-|A|>r+1.$$

Now suppose that $e(A)\neq0$ and let $H$ be a nontrivial component of $G[A]$.
Then by (\ref{eq10}) and Claim \ref{cla13}, we have $H\cong K_{r+1}$.
Note that $G$ is $\theta_{r+1}$-free.
One can see that $d_{H}(w)\leq 1$ for each $w\in B$.
Therefore, $$2e(H)+e(V(H),B)\leq 2e(K_{r+1})+|B|=r(r+1)+|B|<r|B|+|B|=|H||B|,$$
since $r+1<|B|$.
This contradicts (\ref{eq10}). So $e(A)=0$. The proof of Case 2 is completed.
\hfill{\rule{4pt}{8pt}}

\end{document}